\documentclass[a4paper,oneside,10pt,oldfontcommands,reqno]{amsart}
\usepackage[a4paper, total={426pt, 674pt}]{geometry}

\newcommand{\langue}{anglais}	

\usepackage{ifthen}
\ifthenelse{\equal{\langue}{francais}}{
	\usepackage[french]{babel}}
	{\ifthenelse{\equal{\langue}{anglais}}{\usepackage[english]{babel}}{}}
\usepackage[T1]{fontenc}
\usepackage[utf8]{inputenc}
\usepackage[babel,german=guillemets]{csquotes}
\usepackage{eso-pic}

\usepackage[backend=biber, style=alphabetic, sorting=nyt, giveninits=true, doi=false, isbn=false, url=false, eprint=false, maxbibnames=99, maxcitenames=5]{biblatex} 
\DeclareNameAlias{author}{last-first}
\AtEveryBibitem{\clearfield{month}}
\AtEveryBibitem{\clearfield{day}}

\usepackage{lmodern} 
\usepackage{enumitem}
\usepackage{ifthen}
\ifthenelse{\equal{\langue}{francais}}{
	\frenchbsetup{StandardLists=true}}
	{}

\usepackage{graphicx} 
\usepackage{subcaption}

\usepackage{amsmath}
\usepackage{amssymb}
\usepackage{amsthm}
\usepackage{amsfonts}
\usepackage{bbold}
\usepackage{mathtools}
\usepackage{tikz-cd}
\usepackage{hyperref}
\usepackage{multirow}
\usepackage[normalem]{ulem}
\usepackage{cases}
\usepackage{adjustbox}
\usepackage{resizegather}

\ifthenelse{\equal{\langue}{francais}}{
	\newcommand{\theoremenom}{Théorème}
	\newcommand{\propositionnom}{Proposition}
	\newcommand{\lemmenom}{Lemme}
	\newcommand{\corollairenom}{Corollaire}
	\newcommand{\definitionnom}{Définition}
	\newcommand{\remarquenom}{Remarque}
	\newcommand{\exemplenom}{Exemple}
	\newcommand{\conjecturenom}{Conjecture}
}{\ifthenelse{\equal{\langue}{anglais}}{
	\newcommand{\theoremenom}{Theorem}
	\newcommand{\propositionnom}{Proposition}
	\newcommand{\lemmenom}{Lemma}
	\newcommand{\corollairenom}{Corollary}
	\newcommand{\definitionnom}{Definition}
	\newcommand{\remarquenom}{Remark}
	\newcommand{\exemplenom}{Example}
	\newcommand{\conjecturenom}{Conjecture}
}{}}
	
\newtheorem{theoreme}{\theoremenom}[section]
\newtheorem{proposition}[theoreme]{\propositionnom}
\newtheorem{lemme}[theoreme]{\lemmenom}
\newtheorem{corollaire}[theoreme]{\corollairenom}
\newtheorem{definition}[theoreme]{\definitionnom}
\newtheorem{remarque}[theoreme]{\remarquenom}

\newtheorem*{conjecture}{\conjecturenom}

\usepackage{thmtools}
\usepackage{thm-restate}

\makeatletter
\def\cleartheorem#1{%
    \expandafter\let\csname#1\endcsname\relax
    \expandafter\let\csname c@#1\endcsname\relax
}
\makeatother
\newcommand{\compteurThm}{1}
\newcounter{annexe}

\newcommand{\R}{\mathbb{R}}
\newcommand{\N}{\mathbb{N}}
\newcommand{\Z}{\mathbb{Z}}
\newcommand{\dd}{\mathcal{A}}

\newcommand{\ds}{\displaystyle}
\newcommand{\dsum}{\ds\sum}
\newcommand{\eqskip}{ \vspace*{2mm}\\ }

\renewcommand{\(}{\left(}

\newcommand{\ccr}{\color{red}}

\newcommand{\so}{{\rm o}}

\newcommand{\fdsh}{\textup{FDSH}}

\DeclareUnicodeCharacter{0301}{\'{e}}
\DeclareMathOperator{\dist}{dist}
\DeclareMathOperator{\dv}{div}

\begin{document}

\pagestyle{empty} 


\title{Neumann's nodal line may be closed on doubly-connected planar domains}

\author[P. Freitas]{Pedro Freitas}
\author[R. Leylekian]{Roméo Leylekian}
\address{Grupo de F\'{\i}sica Matem\'{a}tica, Instituto Superior T\'{e}cnico, Universidade de Lisboa, Av. Rovisco Pais, 1049-001 Lisboa, Portugal}
\email{pedrodefreitas@tecnico.ulisboa.pt, romeo.leylekian@tecnico.ulisboa.pt}



\begin{abstract}
We show the existence of planar domains with one hole for which the first non-trivial Neumann eigenfunction has a closed nodal line fully contained
inside the domain. This is optimal, as it is known since Pleijel's 1956 result that the nodal line cannot be closed on simply-connected planar
domains. A part of the proof is based on the study of convergence of eigenvalues and eigenfunctions of graph-like domains towards metric graphs.
We improve the known results of convergence of eigenfunctions, by showing a strong transversal convergence.
\end{abstract}
\maketitle



\pagestyle{plain} 

\section{Introduction}

Consider the Neumann eigenvalue problem
\[
\left\{
\begin{array}{lll}
\Delta u + \mu u  =  0 & \text{in }V,\eqskip
 \partial_n u = 0 & \text{on }\partial V,
 \end{array}
\right.
\]
where $V$ is a bounded connected regular (say Lipschitz) planar domain and $\partial_n$ denotes the normal derivative. Under these conditions, it follows that the spectrum consists of a sequence of eigenvalues
\[
 0 = \mu_{1} < \mu_{2}\leq \mu_{3} \dots \to +\infty
\]
with the corresponding eigenfunctions forming a complete orthonormal system of $L^2(V)$. By connectedness, Courant's nodal domain theorem applies and stipulates that any $n$-th eigenfunction has at most $n$ nodal domains. We recall that, for a given function, the nodal domains are the connected components of the set where the function does not vanish, while the nodal line is the closure of its zero set. In particular, Courant's result and the orthogonality of the eigenfunctions imply that the nodal line of a second eigenfunction splits $V$ in exactly two nodal domains.

In a 1956 paper on Courant's theorem~\cite{pleijel}, \AA ke Pleijel stated that the nodal line of any second Neumann eigenfunction had to ``consist of a transverse cutting $V$ into two simply connected
regions.'' His argument also used the fact, which had been proven by P\'{o}lya four years earlier, that the
first non-trivial Neumann eigenvalue is strictly smaller than the first Dirichlet eigenvalue of the same domain. Pleijel then argued that
if the Neumann eigenfunction encircled a subdomain $N_{2}$ of $V$ without touching the boundary $\partial V$, the Neumann and Dirichlet
eigenvalues of $V$ and $N_{2}$, respectively, would be equal. Using the fact that Dirichlet eigenvalues are monotone by inclusion, this would
then lead to a contradiction.

Although not mentioned explicitly in either the argument or the conditions on $V$, it is assumed implicitly in Pleijel's reasoning that
the original domain $V$ is itself simply-connected. Otherwise, if the nodal domain $N_{2}$ encircles one of the {\it holes} in $V$, then
it is no longer possible to reach a contradiction from P\'{o}lya's comparison result due to the presence of Neumann boundary conditions on parts of
$\partial N_{2}$. More precisely, the comparison result between the first Dirichlet and the first non-trivial Neumann eigenvalues used
in the last step in Pleijel's argument now becomes a comparison between the first Neumann-Dirichlet and the first non-trivial
Neumann eigenvalues, and it does no longer necessarilly hold that the former is larger than or equal to the latter. We note, however,
that this argument ensures that any closed nodal line of a second Neumann eigenfunction must encircle a portion of the boundary of the domain.

Probably as a result of this absence of explicitly stated conditions on $V$ for the argument in~\cite{pleijel} to hold, the question of whether
the Neumann nodal line can be closed has either been ignored in the literature or it has even been stated that in the Neumann case the nodal
line associated with a second eigenfunction cannot be closed. An example of the latter may already be found in~\cite[p. 466]{Payne}, where Payne states that in~\cite{pleijel} Pleijel used the inequality between Dirichlet and Neumann eigenvalues mentioned above, together with the ``monotone behaviour of $\lambda_{1}$'' to point out that the first non-trivial Neumann eigenfunction
``can have no closed nodal line.'' It is also given as an exercise in~\cite[Problem 3, p. 128]{bandle1980}. Note further that this imprecision
has percolated in other contexts, e.g. the Steklov problem~\cite{kuttler-sigillito}, for which the issue was corrected only recently~\cite{delatorre-mancini-pistoia-provenzano}.
Thus most of the work on the Neumann
nodal line has focused on its localisation~\cite{atar-burdzy,jerison}, with exceptions being~\cite{Freitas02}, where it was shown that closed
nodal lines in both the Dirichlet and Neumann case do exist on $2-$manifolds, even in the simply-connected case, and~\cite{kennedy18} where
a problem related to certain symmetries of Neumann eigenfunctions was considered.

All of this is in sharp contrast with what happened for the Dirichlet problem, for which not only is the result far from being straightforward
even in the simply-connected case where it remains open but, following a conjecture
formulated by Payne in 1967~\cite[Conjecture 5]{Payne} stating that the second Dirichlet eigenfunction ``cannot have a closed nodal line for
any domain'', there appeared a string of partial results~\cite{Payne2,lin,putter,Melas,d00,
fk08,kiwan,mukherjee-saha}
and counterexamples~\cite{H2ON,fournais01,freitas-krejcirik07,kennedy,dgh21}.
Recently, we have shown that in the Dirichlet case, the number of holes necessary to have a closed nodal line may be brought down to one,
that is, there are doubly-connected domains such that the nodal line encircles the hole while not touching the boundary of the 
domain~\cite{freitas-leylekian}. The intuitive idea behind this construction is based on the fact that having a hole implies the boundary is formed by two disjoint components, say $\Gamma_{1}$ and $\Gamma_{2}$. Because of this, it is then
possible to find a one-parameter family of such domains for which there is a transition between nodal lines that touch only $\Gamma_{1}$ to those 
that touch only $\Gamma_{2}$. By continuity, and provided certain conditions are satisfied (such as simplicity of the second eigenvalue and symmetry of
the eigenfunction), it is to be expected that there will exist a value of the parameter in this transition for which the nodal line 
does not touch either $\Gamma_{1}$ or $\Gamma_{2}$ -- see below for a more complete description, and~\cite{freitas-leylekian} for the result and 
details. Although the Neumann problem turns out to be more complicated, it is possible to start from the same idea to show that there will also 
exist domains in this case for which the nodal line can be closed.

\begin{theoreme}\label{thm:resultat principal}
There exists a doubly-connected bounded open subset of\/ $\R^2$ with a second Neumann eigenfunction whose nodal line is closed and does not touch the boundary.
\end{theoreme}
\begin{figure}[h!]
\centering
\includegraphics[scale=.8]{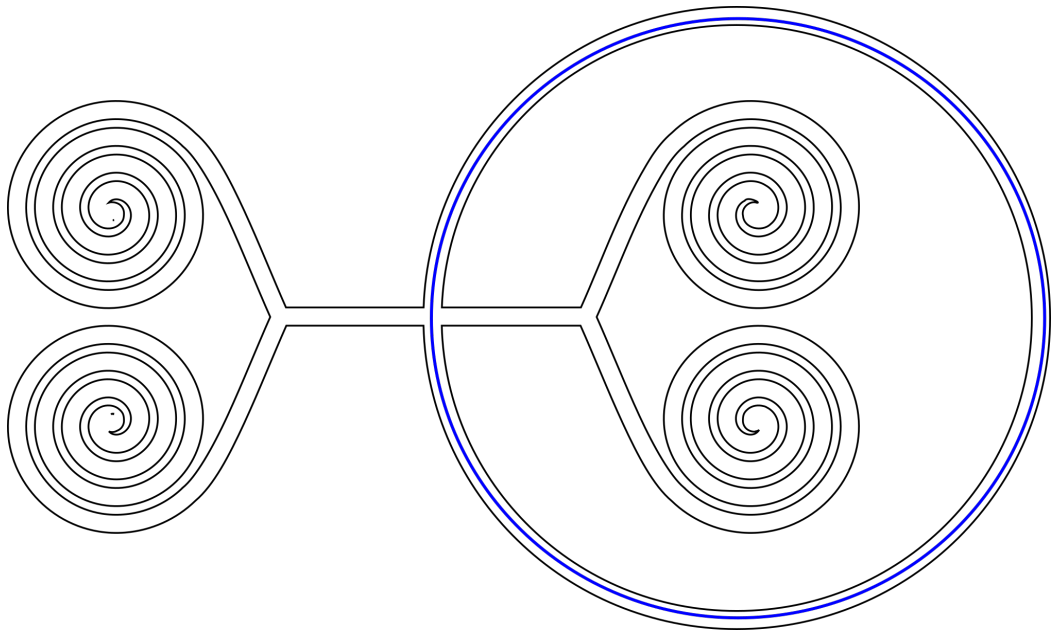}
\caption{Schematic representation of a domain with a second Neumann eigenfunction whose nodal line (in blue) is expected to remain closed.}
\label{fig:contrexemple}
\end{figure}
A schematic representation of a domain for which this phenomenon occurs is shown in Figure~\ref{fig:contrexemple}. Note that by stability of Neumann eigenfunctions
with respect to the excision of small balls~\cite{felli-liverani-ognibene} we also conclude that there exist planar domains with an arbitrary number of holes for which
the Neumann nodal line is closed.

\begin{corollaire}\label{corollaire:plus de trous}
For any integer $N\geq 2$, there exists a bounded connected open subset of\/ $\R^2$ whose boundary is made of $N$ connected components, and with a second Neumann eigenfunction whose nodal line is closed and does not touch the boundary.
\end{corollaire}

\begin{remarque}
The domains in Theorem~\ref{thm:resultat principal} and in Corollary~\ref{corollaire:plus de trous} may further be assumed smooth, symmetric with respect to one direction, and with a simple second eigenvalue. Note also that by small deformations one may generate a continuum of domains, including non-symmetric ones, whose second eigenfunction still has a closed nodal line.
\end{remarque}

Before going further, we would like to point out that our result supports the idea of some parallel, in the behaviour of the nodal line, between a variety of eigenvalue problems associated with the Laplacian. More precisely, we believe that the fact that the nodal line cannot be closed on simply connected planar domains, while it may be on doubly- and multiply-connected domains is a rather general feature. For instance, this dichotomy
was recently established for the Steklov problem by DelaTorre, Mancini, Pistoia and Provenzano in~\cite{delatorre-mancini-pistoia-provenzano}, whose purpose was to reintroduce the assumption of simple connectedness omitted in~\cite{kuttler-sigillito}. In the same way, the present article, combined with Pleijel's argument, shows that the claim holds for the Neumann problem. Besides, our previous paper~\cite{freitas-leylekian} proves the second half of the claim in the Dirichlet case, while the other half is conjectured in~\cite{H2ON}. In the Robin case, the nodal line may be closed on multiply-connected domains according to~\cite{kennedy11}, even if the minimal number of holes needed to construct such a domain remains unknown. Lastly, we would like to mention the paper~\cite{anoop-bobkov-drabek}, where it is shown that, over annuli, the mixed problem obtained by imposing Robin boundary condition on the inner boundary and Neumann condition on the outer one may lead to a circular nodal line, trapped inside the domain. These results suggest (and lend credence) to the following conjecture.

\begin{conjecture}[Topological Nodal Line Conjecture]
Consider the Laplacian on a bounded regular planar domain $\Omega$ with Dirichlet or Robin boundary conditions. Then the nodal line of any second eigenfunction cannot be closed if\/ $\Omega$ is simply-connected, while there exist doubly-connected
domains for which the nodal line is closed and does not touch the boundary.
\end{conjecture}

The present paper consists mostly of the proof of Theorem~\ref{thm:resultat principal}, which is inspired by the \emph{sliding procedure} developed in~\cite{freitas-leylekian}.
However, the Dirichlet problem which is the focus of~\cite{freitas-leylekian} turns out to be considerably simpler than the Neumann problem. The idea of the
sliding procedure in the Dirichlet case is to start from a domain $R$ such as a rectangle for which the nodal line
bisects $R$ vertically. If we then let a wide thin annulus move continuously from the left to the right and consider the domain obtained as 
the union of $R$ with the moving annulus (see Figure~\ref{fig:rectangle}), the eigenfunction will be close to the eigenfunction of $R$. Hence the
nodal line will be close to the vertical segment bisecting $R$. As the annulus moves from the left to the right, the nodal line will touch
different boundary components of the domain. And as the movement is continuous, the nodal line must detach from the boundary at some point, hence it is closed right after that instant. See~\cite{freitas-leylekian} for more details on this sliding procedure.

\begin{figure}[h]
    \centering
    \begin{subfigure}[b]{0.3\textwidth}
        \centering
        \includegraphics[height=.2\textheight]{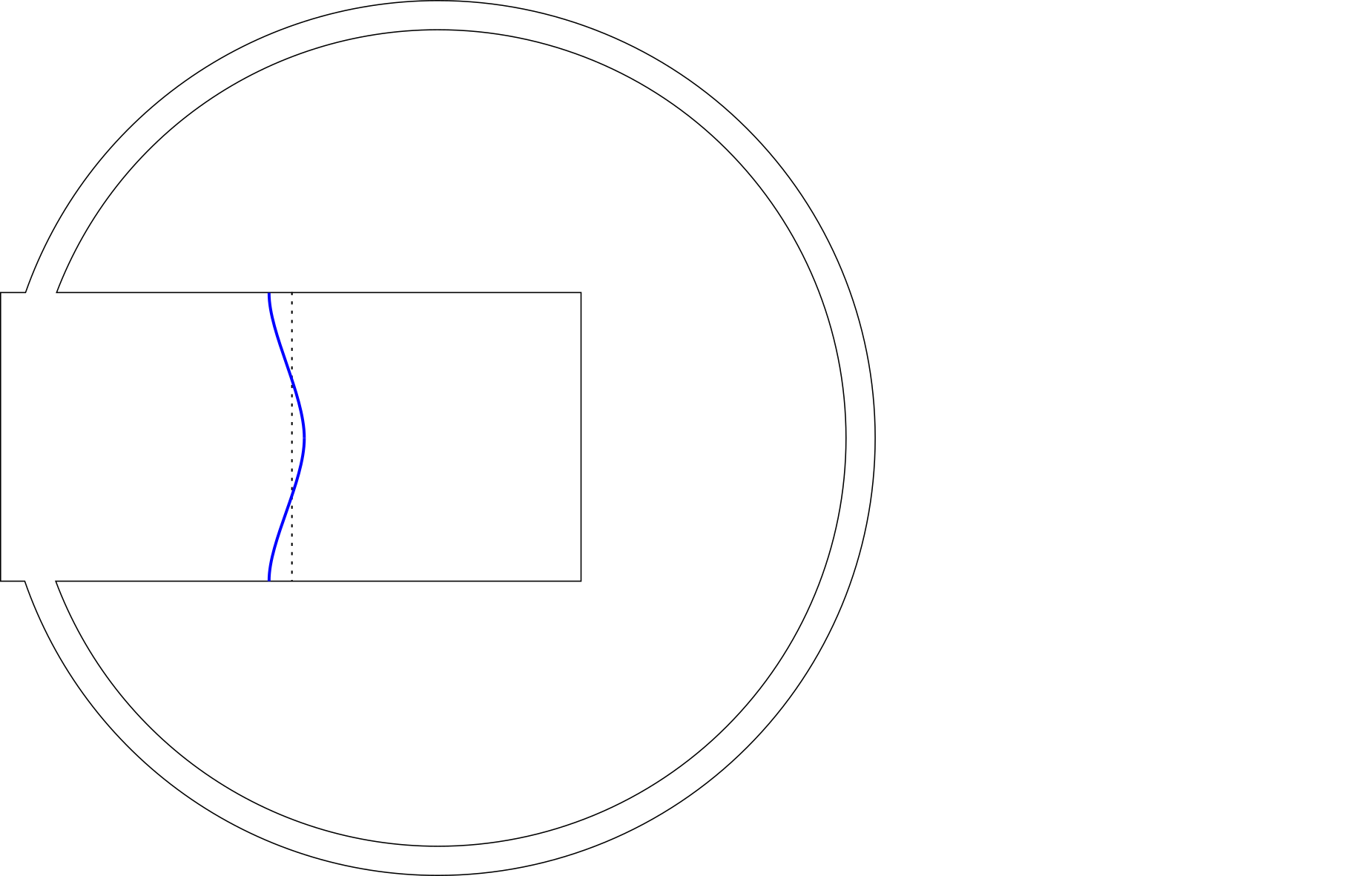}
        
        \label{fig:rectangle t=t--}
    \end{subfigure}
    \hspace{1em}
    \begin{subfigure}[b]{0.3\textwidth}   
        \centering 
        \includegraphics[height=.2\textheight]{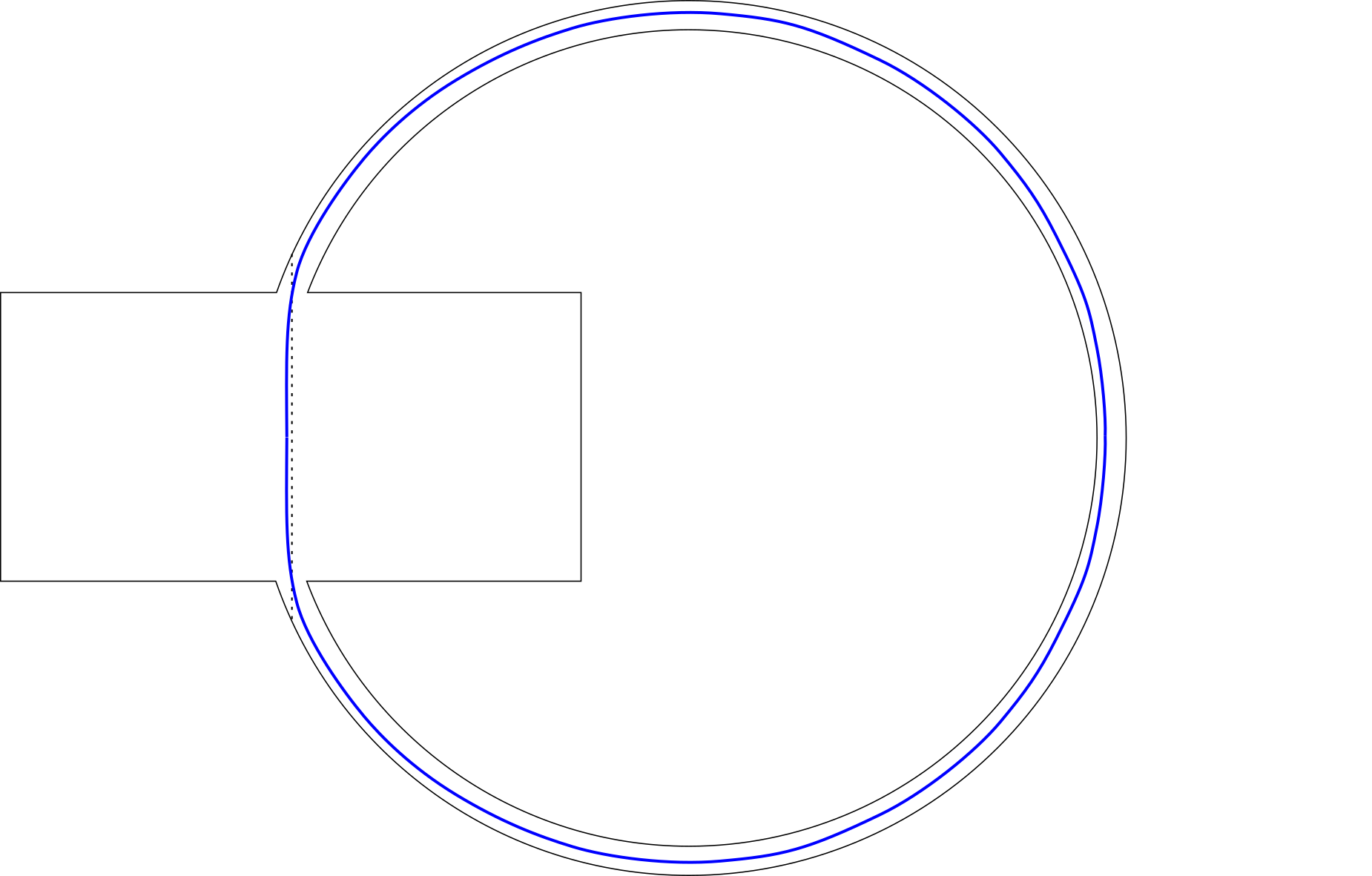}
        
        \label{fig:rectangle t=t0}
    \end{subfigure}
    
    \vskip\baselineskip
    \begin{subfigure}[b]{0.3\textwidth}  
        \centering 
        \includegraphics[height=.2\textheight]{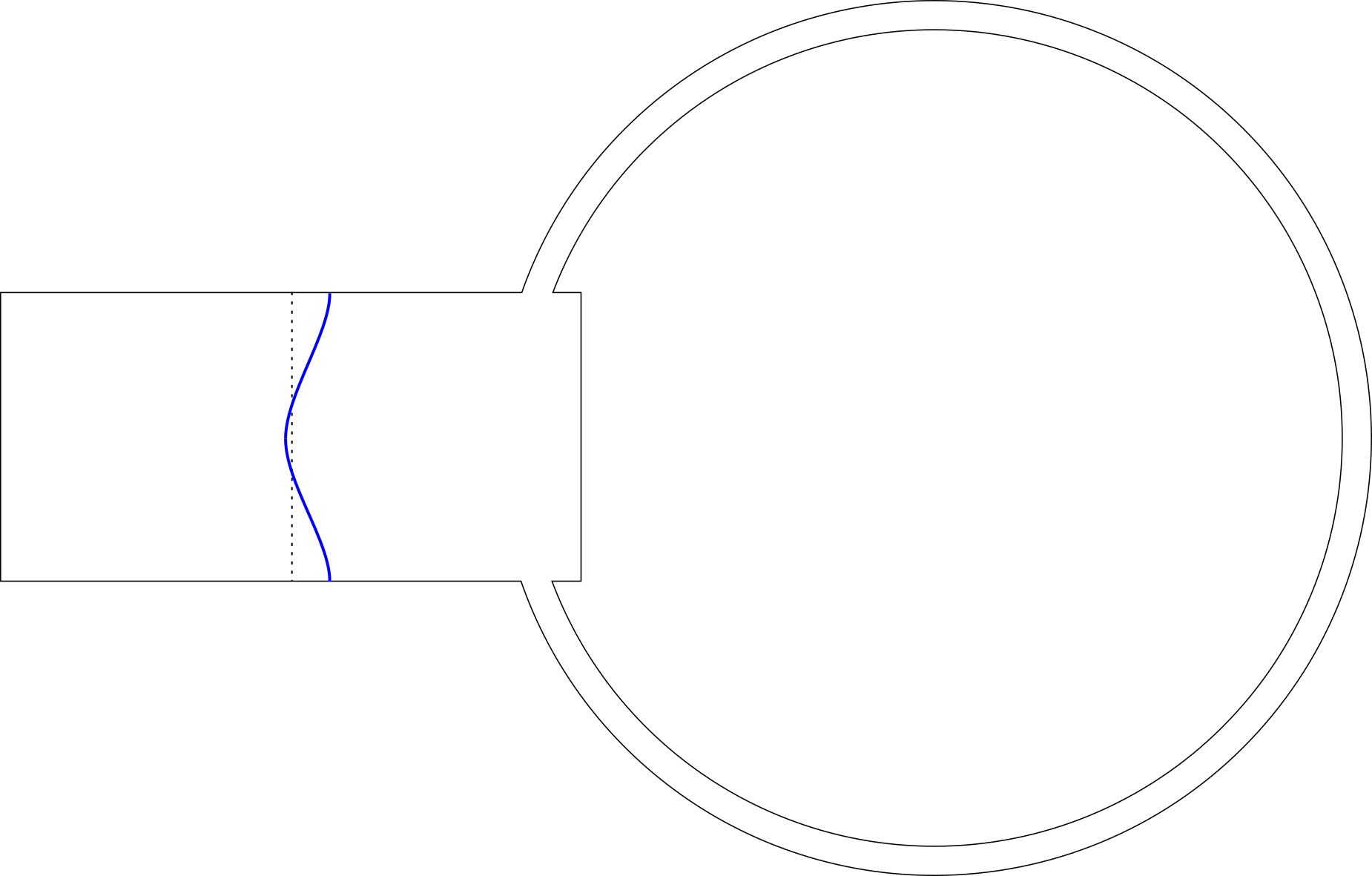}
        
        \label{fig:rectangle t=t++}
    \end{subfigure}
\caption{Illustration of the Dirichlet \fdsh\/ constructed in \cite{freitas-leylekian}. In each case, the
    dotted line is the segment bisecting the rectangle and the blue line is the expected nodal line.}
    \label{fig:rectangle}
\end{figure}

The strategy described above is quite general in the sense that it may be applied to any family of domains whose second eigenfunction's nodal line moves with some regularity from one boundary component to the other. In \cite{freitas-leylekian} this observation motivated the introduction of the concept of \emph{family of domains with a sliding handle} (\fdsh\/) which basically gives a collection of requirements ensuring that the nodal line does behave like this. It then remained to prove that the union of the rectangle and the moving annulus of Figure~\ref{fig:rectangle} fits 
into this framework, namely that it constitutes an~\fdsh. This is what we proved in~\cite{freitas-leylekian}, after smoothing the corners for technical reasons.

For the Neumann problem, in principle the sliding procedure is still valid: if the nodal line touches different boundary components of the domain in the course of the sliding, it means that it should detach at some point. In other words the notion of \fdsh\/ still guarantees the existence of a domain with a closed nodal line. The only difference being that Dirichlet eigenvalues and eigenfunctions must be replaced by Neumann eigenvalues and eigenfunctions in the definition. We will refer to Dirichlet \fdsh\/ and Neumann \fdsh\/ to distinguish between the two settings. The definition of Neumann \fdsh\/ is given in Definition~\ref{def:sliding domains} but we stress that it is exactly the transcription of \cite[Definition~2.1]{freitas-leylekian}, where the notion of Dirichlet \fdsh\/ was formalised.

In the Neumann case however, it is much harder to actually construct an \fdsh. For instance, the Dirichlet \fdsh\/ of Figure~\ref{fig:rectangle} is not a good candidate since it is
expected that in the Neumann problem the nodal line will have two components cutting the annulus transversally, and hence will touch both boundary components, which is excluded in an \fdsh~(see \cite[Lemma~3.1]{freitas-leylekian}). To explain this, let us state the following heuristic
principles for low~order eigenfunctions:
\begin{equation}\label{principle:dirichlet}
\tag{D}
\textit{Dirichlet eigenfunctions concentrate where the domain is not thin.}
\end{equation}
\begin{equation}\label{principle:neumann}
\tag{N}
\textit{Neumann eigenfunctions oscillate along elongated parts of the domain.}
\end{equation}

The first principle confirms a posteriori that it is natural in Figure~\ref{fig:rectangle} to expect the second Dirichlet eigenfunction and its nodal line to concentrate in the rectangle rather than in the annulus. On the contrary, since the latter is long compared to the former, principle~\eqref{principle:neumann} supports the idea according to which the second Neumann eigenfunction is likely to concentrate and to change sign along the annulus.

In view of the above considerations, we understand that for the second Neumann eigenfunction to concentrate in the \enquote{static} part of the domain, it is natural to make that part sufficiently long, compared to the moving annulus. Indeed, if one elongates enough the rectangle horizontally the eigenfunction is likely to concentrate there, and the nodal line is likely to bisect the rectangle vertically, like in the Dirichlet case. In such a configuration the width of the rectangle would probably not play a significant role on the location of the nodal line, hence we may assume that it is of the same order of magnitude as the width of the annulus. This choice simplifies the analysis, especially in the regime of small width. In that regime the domain looks like a graph, similar to that of Figure~\ref{fig:graph simple alonge} -- here and in
what follows, graph vertices are marked as black dots.

This approach to domains that may form Neumann \fdsh\/ lead us naturally to the study of metric graphs and graph-like domains. In the 
present article, metric graphs will always be assumed finite and compact. We refer to \cite{berkolaiko-kuchment} for an introduction to metric 
graphs. In particular it is known \cite[Section~7.5]{berkolaiko-kuchment} that the Neumann Laplacian on graph-like domains converges in some sense 
to the Neumann-Kirchhoff Laplacian on the limit graph. In appendix~\ref{annexe:convergence}, we collect some results in this direction from a paper 
by Post~\cite{post}, and we also prove what we believe to be a new convergence result which we need for our purposes.

The appeal of metric graphs in this context is that it is rather straightforward to derive the equations satisfied by
the spectrum of the Neumann-Kirchhoff Laplacian, consisting of the zeros of the determinant of a matrix with trigonometric entries. The 
corresponding eigenvalues and eigenfunctions may then be approximated numerically if need be. See for instance the discussion in section~\ref{sec:graph with a loop}, where the expression of the eigenfunctions is given explicitly. Numerical calculations confirmed that when the horizontal line is long compared to the circle in Figure~\ref{fig:graph simple alonge}, the nodal set consists of a point located close to the midpoint of that line. Note that this is coherent with principle~\eqref{principle:neumann}. The sliding procedure would then consist in letting slide the circle around the midpoint. However, this is not possible for a planar graph without generating another intersection between the line and the circle.

\begin{figure}[h]
    \centering
    \begin{subfigure}[b]{0.3\textwidth}
        
        \adjustbox{margin=0pt 0pt 0pt 0pt, center}{\includegraphics[height=.09\textheight]{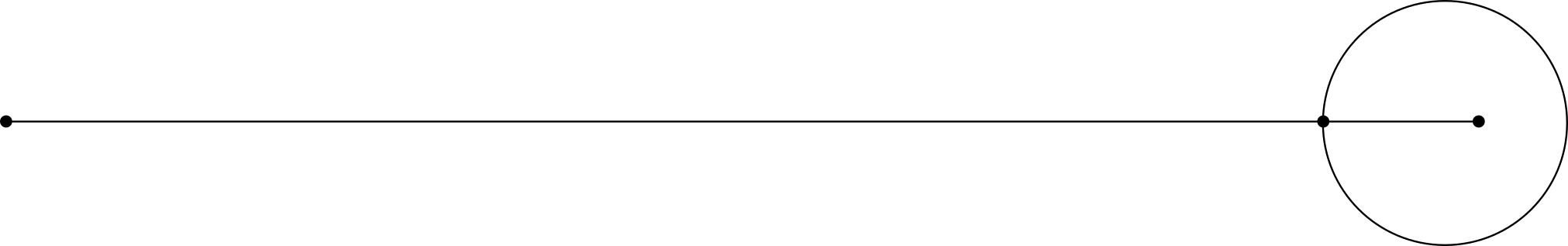}\hspace*{0em}}
        \caption{}
        \label{fig:graph simple alonge}
    \end{subfigure}
    
    \vskip\baselineskip
    \begin{subfigure}[b]{0.3\textwidth}  
        \centering 
        \includegraphics[height=.09\textheight]{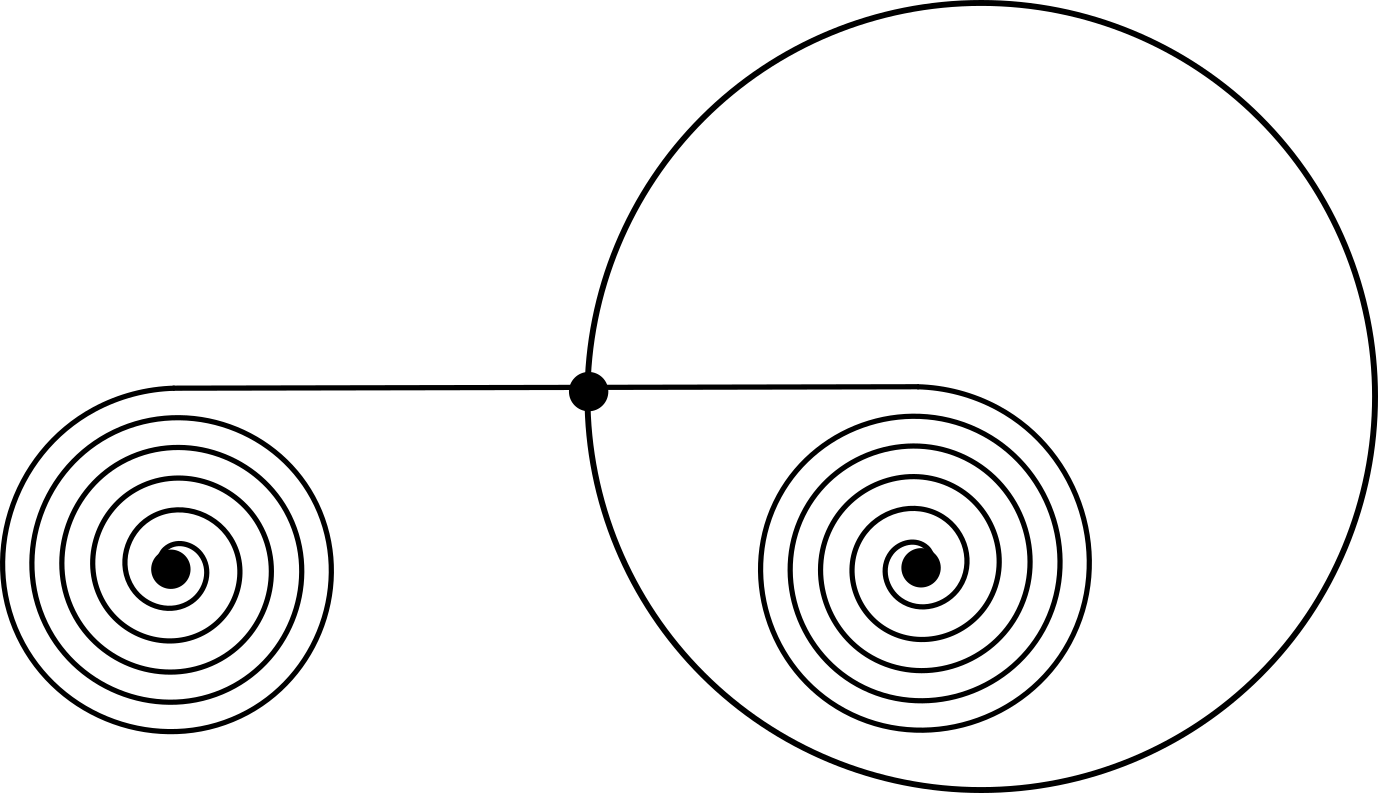}
        \caption{}
        \label{fig:graphe simple enroule}
    \end{subfigure}
\caption{Two graphs described in the introduction.}
    \label{fig:graph simple}
\end{figure}

To circumvent this issue, one possibility is to roll the line over itself, as in Figure~\ref{fig:graphe simple enroule}. Then, it becomes possible to slide the circle across the midpoint of the rolled line, without generating a second intersection.
Graph-like domains obtained from Figure~\ref{fig:graphe simple enroule} would thus be nice candidates for a Neumann \fdsh\/. Unfortunately they are 
not symmetric with respect to the $x$-axis. Indeed, recall that even if the counterexamples in Theorem~\ref{thm:resultat principal} need not be 
symmetric, symmetry is a key feature in our approach by means of \fdsh\/. In other words, one must construct a graph with properties
similar to that of Figure~\ref{fig:graphe simple enroule}, but which are symmetric with respect to the $x$-axis. To that end, we
append two additional spirals, ending up with a graph similar to that of Figure~\ref{fig:Steiner planaire}. Disregarding the loop, such
a graph belongs to the class known in the literature as caterpillar trees, namely trees such as all their nodes are adjacent to some
fixed path~\cite{harary-schwenk}. We will thus refer to it as a \emph{caterpillar tree with a loop} (see Definition~\ref{def:steiner}). The
study of caterpillar trees with a loop will occupy an important part of the present article. The goal is to find a configuration for which the nodal 
set of the second eigenfunction is located appropriately. We shall see that when the spirals are long compared to the other edges, the 
nodal set intersects the horizontal segment close to its midpoint. This result is formalised in Proposition~\ref{prop:construction du graph 
abstrait}. In particular, as the loop slides along the horizontal segment, it must cross that nodal point. This is precisely the property that was lacking in 
the graph of Figure~\ref{fig:graph simple alonge}.

\begin{figure}[h]
\centering
\includegraphics[scale=.8]{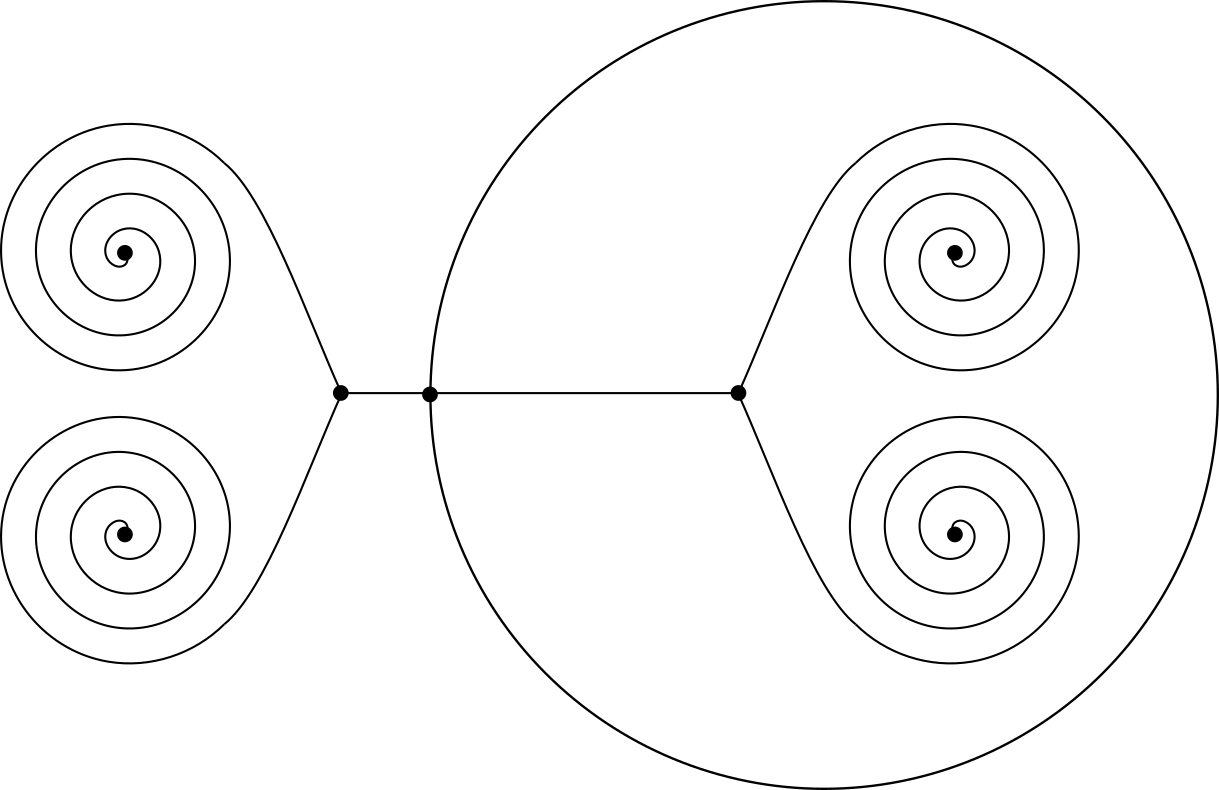}
\caption{A planar caterpillar tree with a loop.}
\label{fig:Steiner planaire}
\end{figure}

\begin{figure}[h]
    \centering
    \begin{subfigure}[b]{0.3\textwidth}
        
        \adjustbox{margin=0pt 0pt 0pt 0pt, center}{\includegraphics[height=.2\textheight]{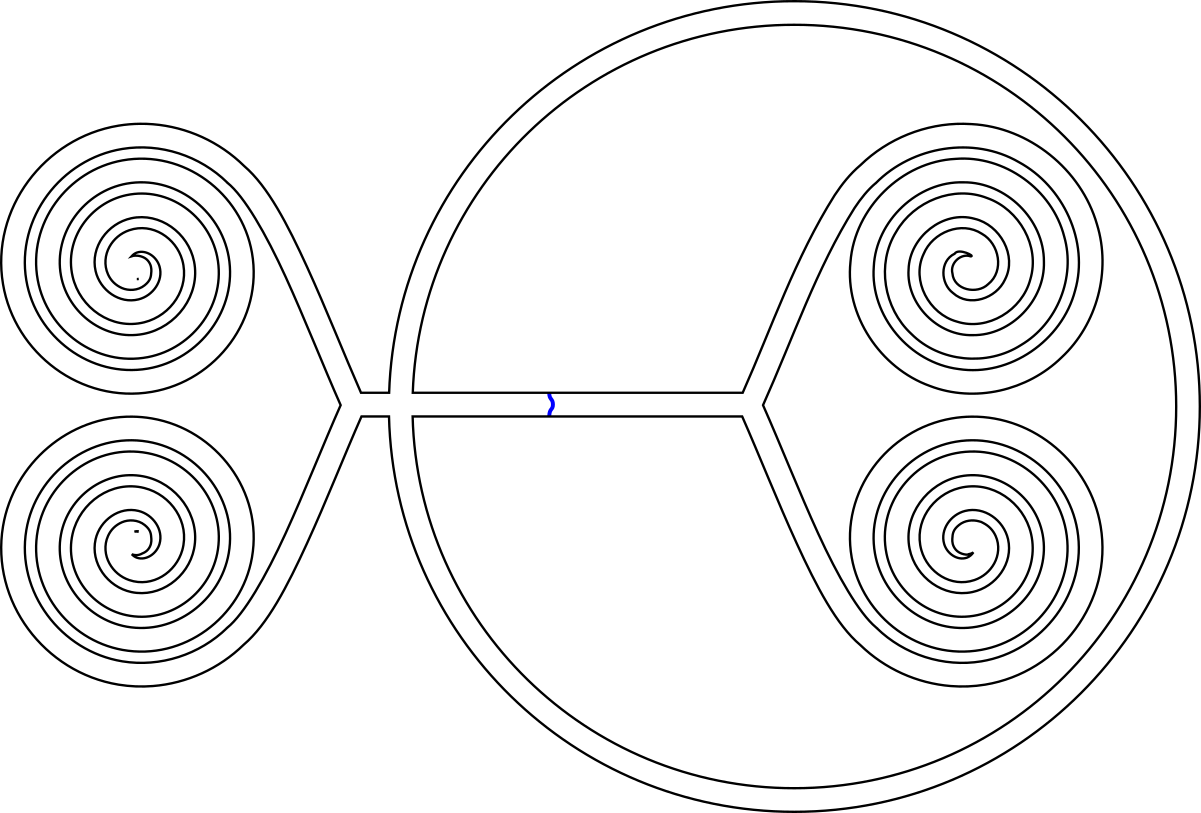}\hspace*{0em}}
        \caption{}
        \label{fig:domaine t-}
    \end{subfigure}
    
    \vskip\baselineskip
    \begin{subfigure}[b]{0.3\textwidth}  
        \centering 
        \adjustbox{margin=0pt 0pt 0pt 0pt, center}{\hspace*{2em}\includegraphics[height=.2\textheight]{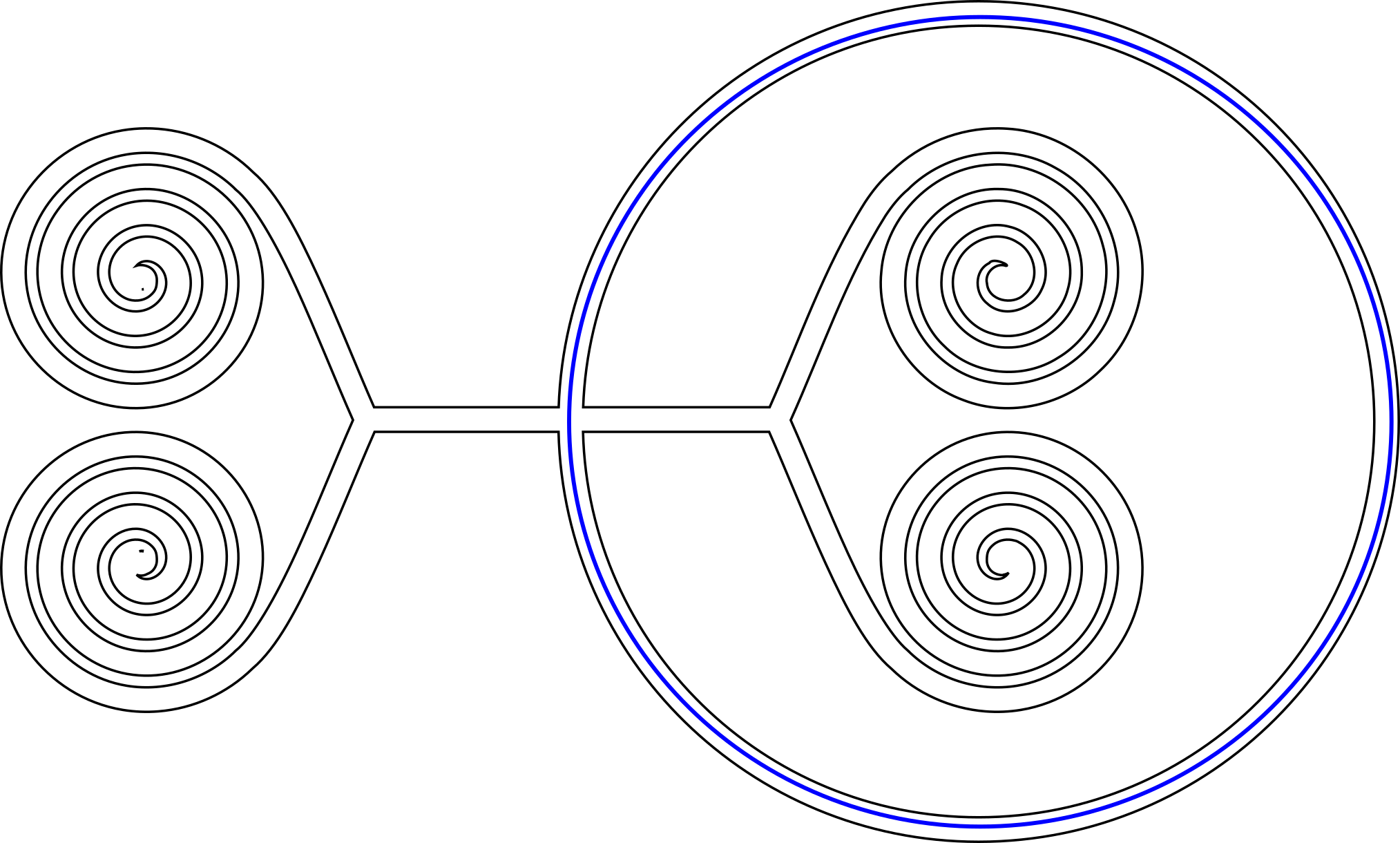}}
        \caption{}
        \label{fig:domaine t0}
    \end{subfigure}
    
    \vskip\baselineskip
    \begin{subfigure}[b]{0.3\textwidth}  
        \centering 
        \adjustbox{margin=0pt 0pt 0pt 0pt, center}{\hspace*{4em}\includegraphics[height=.2\textheight]{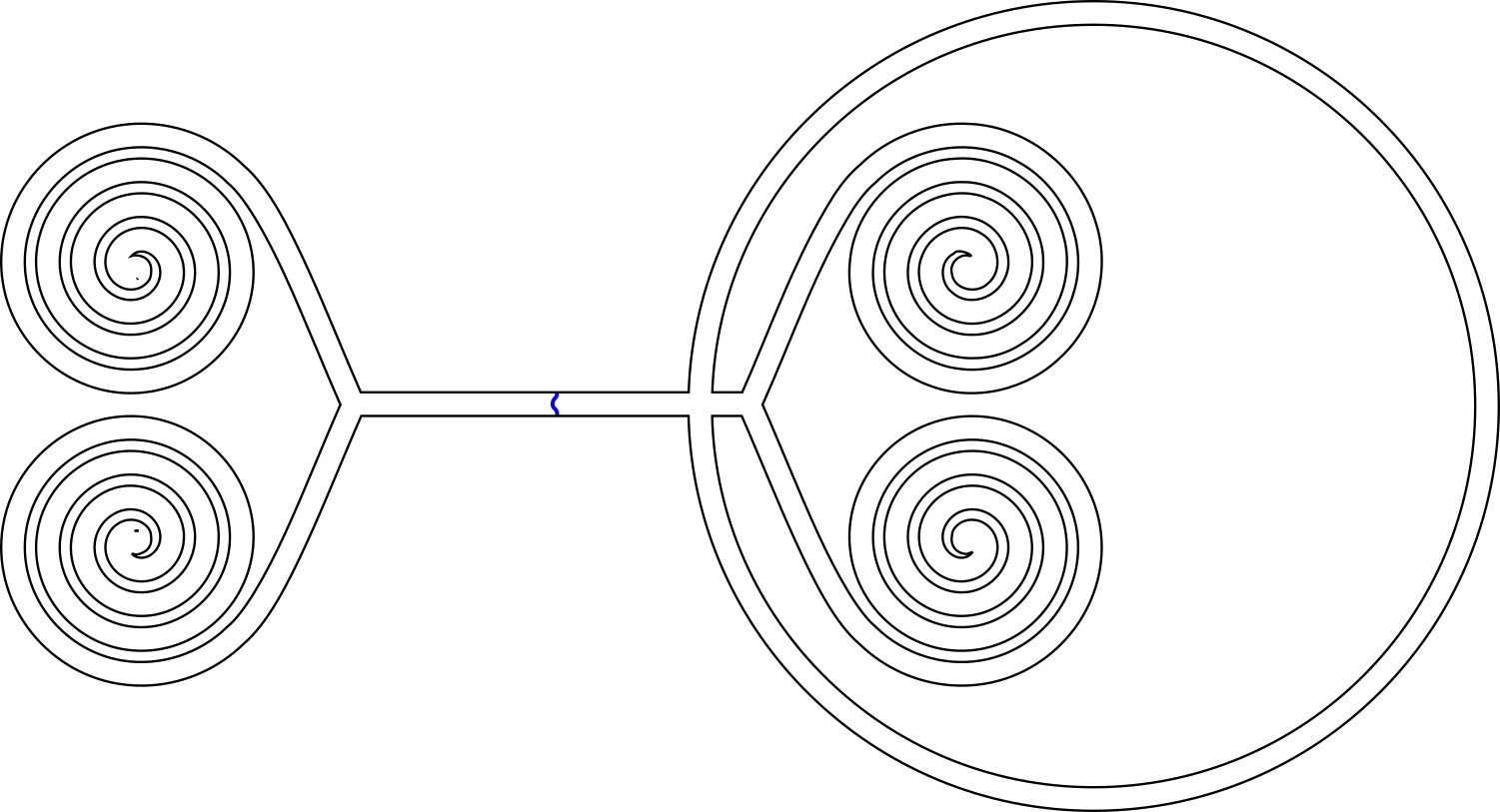}}
        \caption{}
        \label{fig:domaine t+}
    \end{subfigure}
\caption{Schematic representation of the Neumann \fdsh\/ described in the introduction. The expected nodal line is drawn in blue.}
    \label{fig:neumann fdsh}
\end{figure}

The above discussion justifies the consideration of graph-like domains close to planar caterpillar trees with a loop and very long
spirals, leading us to the family of domains sampled in Figure~\ref{fig:neumann fdsh}. As recalled in appendix~\ref{annexe:convergence}, when the width of those graph-like domains shrinks down to zero, the eigenvalues and eigenfunctions
converge to those of the limiting graph. Let us point out that the curvature of the spirals has no influence on the operator in
the limit, since Neumann boundary conditions are considered here~\cite[section~7.5]{berkolaiko-kuchment}. As a consequence, we expect the nodal line of any second eigenfunction on those
domains (in blue in Figure~\ref{fig:neumann fdsh}) to intersect the static part close to the vertical segment bisecting it. This would show points 
\eqref{it:signe segment} and \eqref{it:t- t+} in Definition~\ref{def:sliding domains}, which are the main issues in the construction of a
Neumann \fdsh.

To actually locate the nodal line, one needs refined convergence results for the eigenfunctions, when the width of the graph-like
domains tends to zero. As recalled in appendix~\ref{annexe:convergence}, an $L^2-$convergence result had already been obtained in~\cite{post}.
However, this is not enough to conclude that the nodal line is close to the nodal set of the limit graph. To solve this
problem, we derived what we called a \emph{strong transversal convergence} result -- see Proposition~\ref{prop:convergence} for more details --
which we believe to be of independent interest. In essence, this result indicates that the convergence holds on any transversal
slice of the graph-like domains, in a strong sense. The proof is based on elliptic estimates, but the fact that the domains shrink makes it non-trivial to apply
elliptic regularity theory, and explains why we only obtain convergence on slices. The transversal convergence allows to locate the nodal line
close to the vertical bisecting segment, as claimed above, and to eventually prove that graph-like domains of the type shown in Figure~\ref{fig:neumann fdsh}
do form a Neumann \fdsh.

\subsection*{Another possible strategy}
To prove Theorem~\ref{thm:resultat principal}, one could alternatively be tempted to use more directly the Dirichlet doubly-connected counterexample $\Omega$ from~\cite{freitas-leylekian}, and to argue in the following way. By the penultimate paragraph
in~\cite[p.3]{freitas-leylekian} each nodal domain $\Omega_\pm$ is in itself a doubly-connected shape, where the second Dirichlet eigenfunction $u_2$ is of one sign and vanishes on both boundary components of $\partial\Omega_\pm$. By a result of Weinberger's~\cite{weinberger}, there exists a piecewise analytic line $\gamma_\pm$  closed inside $\Omega_\pm$ without touching its boundary and where $\partial_nu_2=0$. Now the
lines $\gamma_+$ and $\gamma_-$ can be seen as the boundary components of a doubly-connected subdomain $\omega$ of $\Omega$. Furthermore,
the nodal line of $u_2$ is a Jordan curve inside $\omega$. This shows that $v:=u_2|_\omega$ is a Neumann eigenfunction over $\omega$ whose nodal line is closed and does not touch the boundary. The problem with this elementary argument is that one cannot exclude that $v$ is associated with
an eigenvalue of $\omega$ whose order is higher than two. Thus it only proves that there exists a Neumann eigenfunction
over $\omega$ whose nodal line is a Jordan curve.

\begin{figure}[h!]
\centering
\includegraphics[scale=.6]{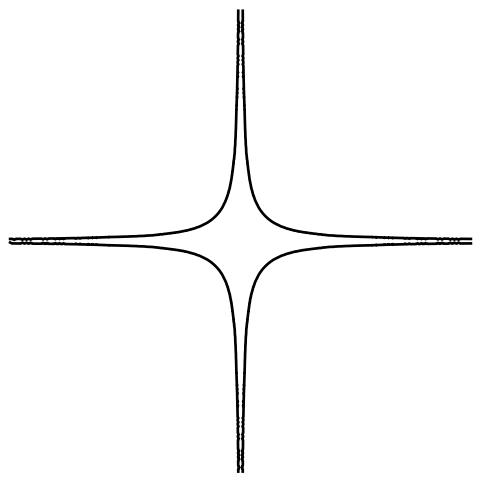}
\caption{Unbounded domain for which there exists a first non-trivial
Neumann eigenfunction whose nodal line
does not touch the boundary.}
\label{fig:unbounded}
\end{figure}

\subsection*{A note on unbounded domains} The above considerations were only concerned with bounded domains. However, the case of unbounded
domains is also of some interest since for instance, the idea for the counterexample presented in~\cite{freitas-leylekian} has its roots in the
unbounded counterexample first given in~\cite{freitas-krejcirik07}. Although the idea for the examples in the present paper now stemmed directly
from those provided in~\cite{freitas-leylekian}, we shall briefly discuss how such unbounded domains with a nodal line which does not touch the
boundary may also be constructed in the Neumann case. To this end, we consider a domain invariant by rotations of $\pi/2$ around a centre-point,
and with four infinite {\it arms} attached to a central body as shown in Figure~\ref{fig:unbounded}. These arms thin out sufficiently fast for the
necessary and sufficient condition for the Neumann Laplacian to have compact resolvent given in~\cite{evha87} to hold.

Clearly either the second eigenvalue is simple or it has a higher multiplicity. In the former case, an argument based on Courant's nodal
domain theorem and the rotational symmetry of the domain imply that the corresponding nodal line is, in fact, bounded and closed. In the
latter case, we will have two linearly independent eigenfunctions which are symmetric about the horizontal and vertical axes. The corresponding 
nodal lines for these two eigenfunctions will then coincide with either the vertical or the horizontal axes of symmetry of the domain, see the discussion in~\cite{kennedy18} for instance; it will thus be unbounded and not touch the boundary.

By carrying out a more careful and precise analysis it would be possible to determine which of the two cases occurs for a given domain, and also
to study what may happen for domains with only two infinite arms, closer to the examples in~\cite{freitas-krejcirik07}. However, our purpose
here is only to give an idea as to how the case of unbounded domains might be handled, and we will not pursue further a more detailed analysis in 
this case.

\subsection*{Open problem} We conclude the introduction by stating the following open problem.
\begin{center}
\emph{How small can the hole in a counterexample be?}
\end{center}
We mention that, unlike in the Dirichlet case, we are unable to argue that the hole may be reduced to a fracture. Indeed, to do so one would need to consider graph-like domains as in Figure~\ref{fig:neumann fdsh}, but where the handle now adheres to the spirals. The issue with this would be that the handle would become very long, while the above construction was precisely made for the handle to remain short compared to the spirals.

\section{Family of domains with a sliding handle}

In this section we introduce and study the notion of Neumann FDSH. Let us stress once again that its definition below is nothing else but the transcription of \cite[Definition~2.1]{freitas-leylekian}, where the concept of Dirichlet FDSH was settled. The only difference is that here, eigenvalues and eigenfunctions refer to the Neumann problem.

\begin{definition}[Neumann~\fdsh]\label{def:sliding domains}
Let $T$ be a non-degenerate interval. A one-parameter family of bounded connected open subsets $(\Omega_t)_{t\in T}$ of\/ $\R^2$ is called a Neumann family of domains with a sliding handle (Neumann \fdsh\/), whenever:
\begin{enumerate}
\item $\forall t\in T$, $\Omega_t$ is symmetric with respect to the $x$-axis and diffeomorphic to $\dd$.
\item $\forall t\in T$, the second eigenvalue of\/ $\Omega_t$ is simple.
\item\label{it:signe segment} $\forall t\in T$, the second eigenfunction of\/ $\Omega_t$ changes sign along a segment $\sigma_t\subseteq\Omega_t$ of the $x$-axis.
\item The eigenfunctions are uniformly continuous on $t$, in the sense that for each $t\in T$ there exists a second eigenfunction $u_t$ on $\Omega_t$ and a diffeomorphism $\Phi_t:\dd\to\Omega_t$ preserving the symmetry with respect to the $x$-axis, for which the pulled-back eigenfunction $v_t:=u_t\circ\Phi_t$ is continuous with respect to $t\in T$ uniformly with respect to $x\in \overline{\dd}$. In other words,
$T\ni t\mapsto v_t\in C(\overline{\dd})$
is continuous.
\item\label{it:t- t+} There exists $t_-\in T$ and $t_+\in T$ such that the nodal line of $v_{t_-}$ does not touch the outer boundary of $\dd$ and the nodal line of $v_{t_+}$ does not touch the inner boundary of $\dd$.
\end{enumerate}
\end{definition}

Then, we establish the main property of Neumann FDSH, according to which one of the domains in the family must admit eigenfunctions with a closed nodal line.

\begin{theoreme}[main property of FDSH]\label{thm:ligne nodale famille doublement connexe}
Let $(\Omega_t)_{t\in T}$ be a Neumann \fdsh. Then, there exists $t_0\in T$ such that the nodal line of a second eigenfunction in $\Omega_{t_0}$ does not touch the boundary, and hence is a closed Jordan curve inside $\Omega_{t_0}$.
\end{theoreme}

As already explained, this result is the analog of \cite[Theorem~2.2]{freitas-leylekian}, and it can be proved in a very similar way. For completeness, we sketch below the main arguments of the proof. The only precaution to take is to make sure that second eigenfunctions cannot have double zeros, like in the Dirichlet case. In the Neumann case this follows from~\cite[Proposition~2.8]{hoffmann-ostenhof-michor-nadirashvili}.

\begin{proof}
Let us assume by contradiction that the nodal line of the second eigenfunction $u_t$ over $\Omega_t$ touches the boudary for all $t\in T$, and set $\mathcal{I}_k:=\{t\in T:\dist(\mathcal{N}_t,\partial B_k)=0)\}$, for $k=1,2$. Recall that $v_t$ denotes the pull-back of $u_t$ on the annulus $\mathcal{A}$ and $\mathcal{N}_t$ is its nodal line. Our assumption shows that $\mathcal{I}_1$ and $\mathcal{I}_2$ cover $T$. By~\cite[Proposition~2.8]{hoffmann-ostenhof-michor-nadirashvili} $u_t$ and hence $v_t$ have no double zero, therefore we apply \cite[Lemma~3.1]{freitas-leylekian} to show that $\mathcal{I}_1$ and $\mathcal{I}_2$ are disjoint. On the other hand, by \cite[Lemma~3.2]{freitas-leylekian}, $\mathcal{I}_1$ and $\mathcal{I}_2$ are closed subsets of $T$. In other words $(\mathcal{I}_1,\mathcal{I}_2)$ forms a partition of the interval $T$ by closed subsets, which is a contradiction. 
\end{proof}

We now turn our attention to the construction a family of domains satisfying all the requirements listed in Definition~\ref{def:sliding domains}. The next proposition ensures that this construction shall be accomplished.

\begin{proposition}\label{prop:FDSH}
There exists a Neumann \fdsh.
\end{proposition}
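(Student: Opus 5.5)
The plan is to build the family explicitly as thin graph-like neighbourhoods of planar caterpillar trees with a loop, in the sense of Figure~\ref{fig:neumann fdsh}, and to deduce each item of Definition~\ref{def:sliding domains} from the graph limit.

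\textbf{Limit graphs.} Fix a caterpillar tree with a loop $G_t$. It consists of a horizontal segment $[-1,1]$ of the $x$-axis, two pairs of long spirals symmetric about the $x$-axis, and a small loop attached at the point $(t,0)$ by a short edge. The parameter $t$ ranges over a compact interval $T=[t_-,t_+]\ni 0$. Using Proposition~\ref{prop:construction du graph abstrait}, I choose the spiral length $L$ large enough that, for every $t\in T$, the following hold for the Neumann--Kirchhoff Laplacian on $G_t$:
\begin{itemize}
\item the second eigenvalue is simple and separated by a uniform gap from the first and third eigenvalues;
\item the eigenfunction $\varphi_t$ is even with respect to the $x$-axis;
\item $\varphi_t$ has a single nodal point on the horizontal segment, located near $0$, where it vanishes with nonzero derivative;
\item $\varphi_t$ does not vanish on the loop.
\end{itemize}
Taking $t_\pm$ on either side of this nodal point, for $t=t_-$ the loop lies entirely inside one nodal domain of $\varphi_t$, and for $t=t_+$ it lies inside the other. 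The loop is only connected to the segment, so sliding it across the nodal point moves it from the region where $\varphi_t<0$ to the region where $\varphi_t>0$.

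\textbf{Thickening.} For small $\varepsilon>0$, let $\Omega_t^\varepsilon$ be a smooth $\varepsilon$-thickening of $G_t$. The thickening is chosen so that it is symmetric in $x$, uniform in $t$, and has vertex neighbourhoods scaled as in Appendix~\ref{annexe:convergence}. It is doubly-connected because of the loop. I build the diffeomorphisms $\Phi_t:\dd\to\Omega_t^\varepsilon$ by composing a fixed diffeomorphism with a smooth horizontal translation of the handle. This translation is supported near the segment and commutes with the reflection.

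\textbf{Verifying the items.} Simplicity in item (2) follows from the spectral convergence of Post recalled in the appendix, which is uniform in $t$ because the geometry depends continuously on $t$ and the limiting gap is uniform. Given simplicity, item (4) follows from continuity of simple eigenprojections under the smooth perturbation $\Phi_t$, together with elliptic regularity up to the boundary for the pulled-back operators. After normalisation by sign, this gives continuity of $t\mapsto v_t$ in $C(\overline{\dd})$. Evenness then follows because simplicity forces the eigenfunction to be even or odd. An odd eigenfunction would converge to an odd graph eigenfunction, which contradicts the choice of $\varphi_t$.

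\textbf{Locating the nodal line.} Items (3) and (5) require locating the nodal line, and this is the main obstacle. The tool is the strong transversal convergence of Proposition~\ref{prop:convergence}. On each transversal slice of an edge, the rescaled eigenfunction converges uniformly to the value of $\varphi_t$ at the corresponding point of the edge, uniformly in $t$. Consequently, away from a small neighbourhood of the nodal point of $\varphi_t$, the function $u_t$ has the sign of $\varphi_t$. This gives the following:
\begin{itemize}
\item item (3): the eigenfunction changes sign along the $x$-axis between two slices on either side of the nodal point;
\item item (5): for $t_\pm$ the whole loop, together with its attaching edge, has a fixed sign, so the nodal line stays in the part of the segment tube bounded by the other boundary component.
\end{itemize}
The delicate points are the vertex regions and the junction of the handle. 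Transversal convergence on edges does not directly control these regions, so I expect to need a separate argument there. I would first try a maximum-principle or Harnack argument on the small vertex neighbourhoods, after choosing $t_\pm$ so that no vertex lies near the nodal point. Failing that, I would use the fact that $\varphi_t$ is bounded away from $0$ near every vertex, and that eigenfunctions converge in $H^1$ on rescaled vertex neighbourhoods, to obtain uniform sign control there.
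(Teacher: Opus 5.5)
Your architecture is the paper's: thicken the planar caterpillar trees, get simplicity from Post's convergence by a compactness argument, get continuity from an implicit-function lemma, and read off signs from Proposition~\ref{prop:convergence}. Item (3) also works once two good slices flanking the nodal point are chosen. The genuine gap is item (5), which you flag but leave open.

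Your route needs $u_t$ to have the sign of $\varphi_t$ \emph{everywhere} away from the nodal point. Proposition~\ref{prop:convergence} does not give this. It yields convergence only on almost every slice, with no uniformity in $x_0$, and says nothing about vertex blocks or the loop. Neither fallback is carried out: $H^1$ information on rescaled vertex blocks gives no pointwise sign, and a maximum principle between slices would need a lower bound on a mixed Dirichlet--Neumann eigenvalue that you do not supply.

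The missing idea is that no global sign control is needed. The loop is attached only at $J$, so the bridge edge containing $I_\epsilon$ ($A$ at $t_+$, $B$ at $t_-$) is a cut edge. Hence every cross-section $F_{A,h}(\{x\}\times(0,1))$ disconnects $\Theta_{h,t}$. Take two good slices $x_1,x_2$ on either side of $I_\epsilon$; for small $h$ the eigenfunction is strictly positive on one and strictly negative on the other. A second eigenfunction has exactly two nodal domains. Any path from beyond the positive slice to the negative slice must cross the positive slice, so a negative value there would create a third nodal domain. The strong maximum principle and Hopf's lemma then exclude zeros without sign change. So the whole nodal line lies in the piece of tube between the two slices, whose lateral boundary belongs to a single boundary component (outer for $A$, inner for $B$). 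This is how the paper concludes, and it uses only two slices.

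Three further points need repair.
\begin{enumerate}
\item ``Uniformly in $t$'' cannot be read off Proposition~\ref{prop:convergence}, which concerns a single family $\Theta_h$. For items (2) and (3), argue by contradiction along sequences $(t_n,h_n)\to(t,0)$. This requires thickenings with $\Theta_{h_n,t_n}\to G_t$ in the sense of Definition~\ref{def:convergence vers graph}, which is item (2) of Proposition~\ref{prop:graph-like domains}; continuity of the geometry in $t$ does not by itself give it.
\item Your limit graph cannot be embedded as described. By symmetry, an edge attached at $(t,0)$ would have to lie on the $x$-axis and overlap the bridge. A symmetric loop through $(t,0)$ must also cross the axis a second time beyond $E$, i.e.\ enclose $E$ and the rolled east rays, for every position of $J$.
\begin{itemize}
\item This is why Lemma~\ref{lemme:aplanissement} uses a circle of length $4\pi$ with a bridge of length $2$ and no extra edge. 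An extra edge would also take you outside Proposition~\ref{prop:construction du graph abstrait}.
\item It is why the translation must be supported near the loop rather than near the segment.
\item It is why the $A$- and $B$-tubes border different boundary components.
\end{itemize}
\item $\varphi_t$ does vanish on the loop when $J$ sits at the nodal point. That bullet therefore holds only for $t$ away from it, which is enough at $t_\pm$.
\end{enumerate}
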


The proof of the proposition will occupy the rest of the paper. As explained in the introduction, we will first need to study the spectrum of a particular class of metric graphs, as in section~\ref{sec:graph with a loop}. Then, we will construct a two-parameter family of graph-like domains, from which the FDSH will eventually be extracted. The proof of Proposition~\ref{prop:FDSH} strictly speaking can be found at the end of section~\ref{sec:graph-like domains}. Pretending that it is established, let us complete the proof of the main result of the paper.

\begin{proof}[Proof of Theorem~\ref{thm:resultat principal}]
By Proposition~\ref{prop:FDSH} there exists a Neumann FDSH, and by Theorem~\ref{thm:ligne nodale famille doublement connexe} the second eigenfunction on one of those domains admits a closed nodal line which does not touch the boundary.
\end{proof}

\section{Caterpillar trees with a loop}\label{sec:graph with a loop}

In this section, we define the notion of caterpillar tree with a loop and analyse the spectrum of the Neumann-Kirchhoff Laplacian on such graphs. The core part of the section is Proposition~\ref{prop:construction du graph abstrait}, which is a result on the location of the nodal set of the second eigenfunction when the rays of the caterpillar tree (see the definition below) are very long compared to the other edges.

\begin{definition}[Caterpillar tree with a loop]\label{def:steiner}
A metric graph $G$ is called a caterpillar tree with a loop if\/ $G$ admits the seven vertices $W,J,E,NW,SW,NE,SE$ and the seven edges $R_1$,$R_2$,$R_3$,$R_4$, $A$,$B$,$L$ where:
\begin{itemize}
\item $R_1,R_2,R_3,R_4$ are of length $R>0$ and link respectively $NW,SW,NE,SE$ to $W,W,E,E$.
\item $A$ is of length $a>0$ and links $W$ to $J$ while $B$ is of length $b>0$ and links $E$ to $J$.
\item $L$ is a loop around $J$, of length $l\geq 0$.
\end{itemize}
See Figure~\ref{fig:Steiner}. The vertex $J$ is called the junction point of the graph, while $W$ and $E$ are respectively called the west and east bifurcation points. The edges $R_1,R_2,R_3,R_4$ are called the rays of the graph. Lastly we define the bridge of $G$ as the concatenation of the edges $A$ and $B$ and denote its length $c=a+b$. Note that we authorize $l=0$, which amounts to removing the loop from the graph.
\end{definition}

\begin{figure}
\centering
\includegraphics[scale=.8]{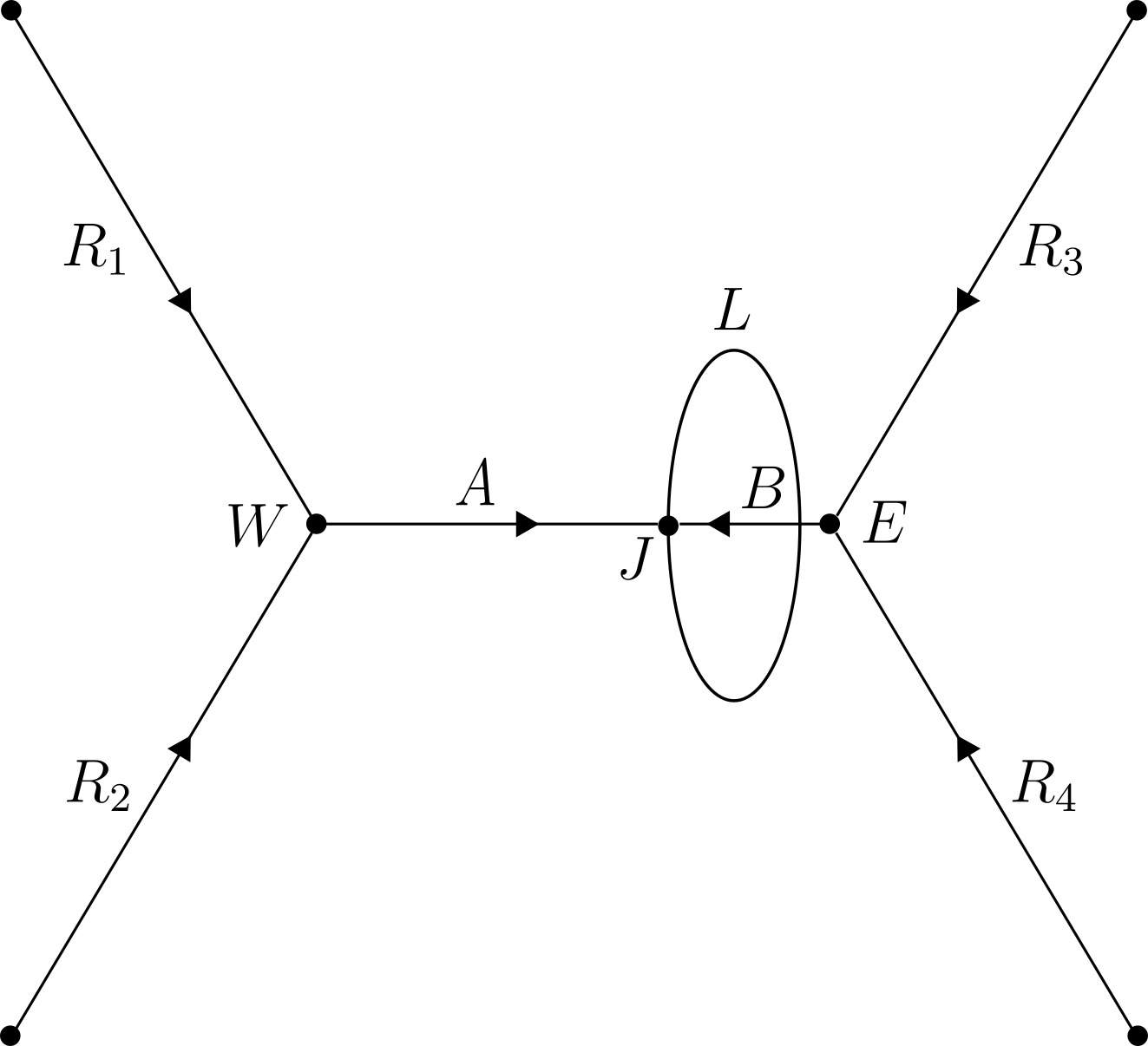}
\caption{A caterpillar tree with a loop. The black dots are the vertices of the graph. The arrows correspond to the orientation induced by the local parametrisation~\eqref{eq:parametrisation} of the graph.}
\label{fig:Steiner}
\end{figure}

Let $G$ be a caterpillar tree with a loop. In what follows, unless otherwise mentioned we will use the following local arclength parametrisation of the graph:

\vspace{-.5\baselineskip}
\begin{equation}\label{eq:parametrisation}
\begin{minipage}{.9\textwidth}
\begin{itemize}[leftmargin=.5em]
\item $A$ is parametrised by $t\in [0,a]$ in a way that $J$ corresponds to $t=a$;
\item $B$ is parametrised by $t\in [0,b]$ in a way that $J$ corresponds to $t=b$;
\item $L$ is parametrised by $t\in [0,l]$ in a way that $J$ corresponds to $t=0,l$;
\item The rays are parametrised by $t\in[0,R]$ in a way that $t=R$ corresponds to $W$ on $R_1,R_2$ and to $E$ on $R_3,R_4$;
\item The bridge is parametrised by $t\in [-c/2,c/2]$ in a way that $W$ corresponds to $t=-c/2$ and $E$ corresponds to $t=+c/2$.
\end{itemize}
\end{minipage}
\end{equation}
\vspace{-0\baselineskip}
This parametrisation naturally endows the edges $A,B,L,R_1,...,R_4$ with an orientation, which is represented by the arrows in Figure~\ref{fig:Steiner}, for the reader's convenience. Note that $G$ is symmetric with respect to the vertical reflection $r_{NS}$. Furthermore, when $G$ has no loop, it is also symmetric with respect to the horizontal reflection $r_{EW}$. Those reflections are the involutions respectively defined by
\begin{align}\label{eq:NS}
r_{NS}= & \left(
\begin{array}{rcl}
t\in [0,R]\cong R_i & \mapsto & t\in[0,R]\cong R_{i-(-1)^i},\quad i=1,...,4 \eqskip
t\in A\cup B & \mapsto & t\in A\cup B \eqskip
t\in [0,l]\cong L & \mapsto & l-t\in [0,l]\cong L
\end{array}
\right)
\eqskip
\label{eq:EW}
r_{EW}= &\left(
\begin{array}{rcl}
t\in [0,R]\cong R_i & \mapsto & t\in[0,R]\cong R_{(i+2)\hspace*{-.5em}\mod 4},\quad i=1,...,4 \eqskip
t\in [-c/2,c/2]\cong A\cup B & \mapsto & c/2-t\in[-c/2,c/2]\cong A\cup B
\end{array}
\right)
\end{align}

We now endow the graph $G$ with the Neumann-Kirchhoff Laplacian, defined as follows. The domain of this operator is the set $H^2(G)$ of functions $u$ whose restriction to each edge $e$, when identified with an open segment $I\subseteq\R$ of same length as $e$, is in $H^2(I)\subseteq C^1(\overline{I})$, and such that at a vertex $v$ the following Kirchhoff condition is fulfilled:
\begin{equation}\label{eq:kirchhoff}
\begin{cases}
{\ds \lim_{e\ni x\to v}u(x)}=u(v) & \forall e\sim v,\eqskip
\dsum_{e\sim v} \partial_n (u|_e)(v)=0.
\end{cases}
\end{equation}
Here, for a given edge $e$, $e\sim v$ means that $v$ is an endpoint of $e$, while $\partial_n$ denotes the derivative at the vertex in the direction pointing outward the edge. Note that when the vertex $v$ is the endpoint of a single edge, we get back the traditional Neumann condition $\partial_n u(v)=0$. Over the functional space $H^2(G)$, the Laplacian $-\Delta$ is defined by the expected formula $-\Delta u(x):=-u|_e''(x)$ for any $x\in e$. This defines a self-adjoint operator on the graph, which admits a sequence of non-negative eigenvalues $\nu^2$, growing to $+\infty$. The corresponding eigenfunctions $u\in H^2(G)$ are given by the relation $-\Delta u =\nu^2 u$ and they may be expressed on each edge as combination of trigonometric functions. In particular, for a caterpillar tree with a loop, any eigenfunction $u$ is of the form
\begin{equation}\label{eq:fonction propre expression}
u(t)=
\begin{cases}
\alpha_1\cos(\nu t) & \text{if }t\in [0,R]\cong R_1\\
\alpha_2\cos(\nu t) & \text{if }t\in [0,R]\cong R_2\\
\alpha_3\cos(\nu t) & \text{if }t\in [0,R]\cong R_3\\
\alpha_4\cos(\nu t) & \text{if }t\in [0,R]\cong R_4\\
\gamma_1\cos(\nu t)+\gamma_2\sin(\nu t) & \text{if }t\in[0,a]\cong A\\
\delta_1\cos(\nu t)+\delta_2\sin(\nu t) & \text{if }t\in[0,b]\cong B\\
\lambda_1\cos(\nu t)+\lambda_2\sin(\nu t) & \text{if }t\in[0,l]\cong L,
\end{cases}
\end{equation}
for some coefficients $\alpha_1,...,\alpha_4,\gamma_1,\gamma_2,\delta_1,\delta_2,\lambda_1,\lambda_2\in\R$, $i=1,...,4$. The eigenfunction satisfies the Kirchhoff condition~\eqref{eq:kirchhoff} if and only if the vector $(
\alpha_1, \alpha_2, \alpha_3, \alpha_4, \gamma_1,\gamma_2,\delta_1,\delta_2,\lambda_1,\lambda_2)$ is in the kernel of the matrix
\begin{gather*}\label{eq:matrice}
M=\begin{pmatrix}
\cos(\nu R) & -\cos(\nu R) &  &  &  &  &  &  &  &  \\
\cos(\nu R) &  &  &  & -1 &  &  &  &  & \\
\sin(\nu R) & \sin(\nu R) &  &  &  & 1 &  &  &  & \\
&  & \cos(\nu R) & -\cos(\nu R) &  &  &  &  &  &  \\
&  & \cos(\nu R) &  &  &  & -1 &  &  & \\
&  & \sin(\nu R) & \sin(\nu R) &  &  &  & 1 &  & \\
&  &  &  & \cos(\nu a) & \sin(\nu a) & -\cos(\nu b) & -\sin(\nu b) &  & \\
&  &  &  & \sin(\nu a) & -\cos(\nu a) & \sin(\nu b) & -\cos(\nu b) & \sin(\nu l) & 1-\cos(\nu l) \\
&  &  &  & \cos(\nu a) & \sin(\nu a) & & & -1 & \\
&  &  &  &  &  &  &  & \cos(\nu l)-1 & \sin(\nu l) \\
\end{pmatrix}
\end{gather*}

Therefore the number $\nu^2$ is an eigenvalue if and only if the above matrix is not invertible. By computing the determinant, we see that this is equivalent to the condition 
\begin{equation}\label{eq:determinant}
\cos(\nu R)^2\sin(\nu l/2)f(\nu,R,a,b,l)=0
\end{equation}
where
\begin{equation}\label{eq:determinant fonction}
\begin{split}
f(\nu,R,a,b,l):= & 10\cos(\nu(a-b))\sin(\nu l/2)-6\sin(\nu(a+b+l/2))+\sin(\nu(a+b+l/2-2R)) \\
&-6\cos(\nu(a-b))\cos(2\nu R)\sin(\nu l/2)+9\sin(\nu(a+b+l/2+2R)).
\end{split}
\end{equation}
When $l=0$, the last two lines and columns of the matrix must be removed, as well as the factor $\sin(\nu l/2)$ in equation~\eqref{eq:determinant}. Observe that $\nu=2k\pi/l$ and $\nu=(2k+1)\pi/(2R)$ are always roots of this equation, for all $k\in\N$. They correspond to eigenfunctions localised respectively on the loop and on one of the paths $R_1\cup R_2$ and $R_3\cup R_4$. The eigenvalue $\nu^2=0$ is associated with constant functions, which are actually eigenfunctions on any (compact) metric graph.

The second eigenvalue is the square of the first positive zero of the previous equation. The coefficients $(\alpha_1,...,\alpha_4,\gamma_1,\gamma_2,\delta_1,\delta_2,\lambda_1,\lambda_2)$ in the expression of the associated eigenfunctions are obtained as elements in the kernel of $M$. But it is also uselful to recall that the second eigenvalue shall alternatively be computed thanks to the following min-max formula \cite[equation (2.6)]{berkolaio-kennedy-kurasov-mugnolo}
\begin{equation}\label{eq:minmax}
\nu^2=\inf\left\{\frac{\int_G |v'|^2}{\int_G v^2}:v\in H^1(G),\quad \int_G v=0\right\},
\end{equation}
where $H^1(G)$ is the set of functions in $\oplus_e H^1(e)$ which are continuous at each vertex, i.e. which satisfy the first condition in~\eqref{eq:kirchhoff}. This property allows for instance to prove that the second eigenvalue on a caterpillar tree is smaller than the second eigenvalue on the subgraph obtained as the concatenation of $R_1$ and $R_2$ (or $R_3$ and $R_4$).

\begin{lemme}\label{lemme:borne sup valeur propre}
The second eigenvalue $\nu^2$ of a caterpillar tree with a loop satisfies $\nu^2<\pi^2/(2R)^2$.
\end{lemme}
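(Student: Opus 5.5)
Following the remark just before the statement, I would use the min-max characterisation~\eqref{eq:minmax} with a test function supported on the path $P:=R_1\cup R_2$. This path is a segment of length $2R$ through $W$. On it, the second Neumann eigenfunction of the interval is $\phi(s)=\sin(\pi s/(2R))$, where $s\in[-R,R]$ is arclength with $s=0$ at $W$. Concretely, I set $v(t)=-\sin(\pi(R-t)/(2R))$ on $R_1$ and $v(t)=\sin(\pi(R-t)/(2R))$ on $R_2$, using the parametrisation~\eqref{eq:parametrisation} in which $t=R$ corresponds to $W$. On all other edges ($A$, $B$, $L$, $R_3$, $R_4$) I set $v\equiv 0$.

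The function $v$ is continuous at $W$, where it vanishes, and it vanishes identically elsewhere, so $v\in H^1(G)$. By antisymmetry under $r_{NS}$ we have $\int_G v=0$. Its Rayleigh quotient equals that of $\phi$ on $[-R,R]$, namely $\pi^2/(2R)^2$. Hence $\nu^2\le \pi^2/(2R)^2$.

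For strict inequality, I argue by contradiction. Suppose equality holds. Then $v$ attains the infimum in~\eqref{eq:minmax}, so it is a minimiser of the constrained problem. Standard variational arguments (a Lagrange multiplier for the constraint $\int_G v=0$, together with the fact that constants are eigenfunctions) then show that $v$ is a second eigenfunction, so in particular $v\in H^2(G)$ satisfies the Kirchhoff condition~\eqref{eq:kirchhoff} at $W$. I then check this condition directly.
\begin{itemize}
\item The derivatives of $v$ along $R_1$ and $R_2$ at $W$ combine as for the smooth function $\phi$ across $s=0$: the outward derivatives are $\mp\pi/(2R)$ on the two rays. Their sum is $0$.
\item The outward derivative along $A$ is $0$, since $v\equiv0$ there.
\end{itemize}
So Kirchhoff does hold at $W$, and this gives no contradiction. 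I must instead use the equation on $A$. On $A$, $v\equiv 0$ satisfies $-v''=\nu^2v$ trivially. The eigenfunction property is likewise satisfied on every other edge and at every other vertex. So $v$ really is an eigenfunction with eigenvalue $\pi^2/(2R)^2$. This is consistent with the paper's remark that $\nu=(2k+1)\pi/(2R)$ is always a root of~\eqref{eq:determinant}.

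The real content of strictness is therefore to exhibit a test function with a strictly smaller quotient. I would perturb $v$ so that it spreads into the rest of the graph. Let $w$ be the first nontrivial eigenfunction of the path $R_1\cup A\cup B\cup R_3$ (or of $P$ extended by $A$). This path has length $2R+c>2R$, so its quotient is $\pi^2/(2R+c)^2<\pi^2/(2R)^2$. Such $w$ is $\cos(\pi s/(2R+c))$-type with $\int w=0$ on the path. Extending $w$ by constants on $R_2$, $R_4$ and $L$ would break the mean-zero condition, so I extend it symmetrically instead. A clean choice is $w(s)=\sin(\pi s/L_0)$ along the path $R_1\cup A\cup B\cup R_3$, where $s\in[-L_0/2,L_0/2]$ is arclength centred at the midpoint $J$ and $L_0=2R+c$. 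Then I extend $w$ to $R_2$ and $R_4$ by reflection, $w|_{R_2}=w|_{R_1}\circ r_{NS}$ and $w|_{R_4}=w|_{R_3}\circ r_{NS}$, and set $w\equiv w(J)=0$ on $L$.

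This $w$ is continuous and odd under $r_{EW}$, so it has mean zero. It has a negative part on the west and a positive part on the east. Each ray pair contributes identical energy and mass. Compared with the bare path, the ray $R_2$ (respectively $R_4$) adds to the integrals the quantities $\int_{R_1}|w'|^2$ and $\int_{R_1}w^2$. Because $|w|$ is large and $|w'|$ is small near the endpoints on the rays (close to the extrema of the sine), I expect the ratio of these added pieces to be below $\pi^2/L_0^2$. The quotient then stays at most about $\pi^2/L_0^2<\pi^2/(2R)^2$.

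I expect this last comparison to be the main obstacle. It requires an elementary but careful estimate that
\[
\int_{c/2}^{L_0/2}\cos^2(\pi s/L_0)\,ds\le\int_{c/2}^{L_0/2}\sin^2(\pi s/L_0)\,ds .
\]
This holds since $s\ge L_0/4$ dominates the interval when $R$ is large compared to $c$. If it fails in general, I fall back on the general principle that $v$ above is not the minimiser once additional edges are attached. Concretely, $\nu^2$ is strictly decreasing under attaching edges at a point where an eigenfunction is nonzero. I would apply this to a variant of the test function that does not vanish at $W$, for instance a shifted sine on $P$ whose nodal point is moved off $W$ slightly, and compute the first-order variation of its quotient.
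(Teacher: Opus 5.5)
Your first step is fine: the antisymmetric sine on $R_1\cup R_2$ gives $\nu^2\le\pi^2/(2R)^2$. You also rightly notice that this function is itself an eigenfunction, so strictness needs a better test function. The gap is in that better test function.

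You centre the sine ``at the midpoint $J$'', but $J$ is the midpoint of the bridge only when $a=b$. In Definition~\ref{def:steiner}, $a,b>0$ are arbitrary, and the lemma is needed for every position of the junction point. For instance, it is invoked for all $x\in(-1,1)$ in the proof of Proposition~\ref{prop:construction du graph abstrait}. When $l>0$ and $a\neq b$, $r_{EW}$ is not a symmetry of $G$. Your sine, odd about the bridge midpoint, then takes the value $w(J)=\sin(\pi(a-b)/(2L_0))\neq0$ at the junction. Setting $w\equiv0$ on $L$ destroys continuity. Setting $w\equiv w(J)$ on $L$ gives $\int_G w=l\,w(J)\neq0$. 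Either way $w$ is not admissible in \eqref{eq:minmax}. As written, you have proved strictness only when $a=b$ or $l=0$. The fallback (strict decrease under attaching edges, first-order variation of a shifted sine) is not carried out and cannot stand in for an argument.

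The repair is short. Keep your $w$ on the loopless part, put $w\equiv w(J)$ on $L$, and test with $w-\bar w$, where $\bar w=l\,w(J)/|G|$ and $|G|=4R+c+l$. The energy $E_0$ is unchanged, since $w$ is constant on $L$. The mass satisfies $\int_G(w-\bar w)^2=M_0+l\,w(J)^2(1-l/|G|)\ge M_0$, where $M_0$ is the mass on the loopless part. So the quotient is at most $E_0/M_0\le\pi^2/(2R+c)^2<\pi^2/(2R)^2$.

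The paper avoids the loop instead. Assuming $a\le b$, it takes $\cos(\mu\,\cdot)$ with $\mu=\pi/(2(R+a))$ on $R_1$ and $R_2$, continued along $A$ so that it vanishes at $J$. It subtracts the $r_{EW}$-mirror image, which lives on $R_3\cup R_4$ and the length-$a$ piece of $B$ next to $E$, disjoint from the first support since $a\le b$. It sets the function to zero elsewhere, including on $L$, which gives $\nu^2\le\mu^2<\pi^2/(2R)^2$. The paper writes the quotient as $=\mu^2$; it is actually smaller, because the Kirchhoff condition fails at $W$, which is harmless. Your repaired route gives the slightly sharper bound $\pi^2/(2R+c)^2$.

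Finally, the inequality you flag as the main obstacle holds for all $R$ and $c$, not only for large $R$:
\[
\int_{c/2}^{L_0/2}\bigl(\sin^2-\cos^2\bigr)(\pi s/L_0)\,ds=\frac{L_0}{2\pi}\sin(\pi c/L_0)\ge0 .
\]
You should state and prove this rather than hedge.
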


\begin{proof}
Let $G$ be a caterpillar tree with a loop, and assume without loss of generality that $a\leq b$. Let $\mu=\pi/(2(R+a))$ and consider the function $w$ such that, using the parametrisation~\eqref{eq:parametrisation}, $w(t)=\cos(\mu t)$ for $t \in R_1$ and for $t\in R_2$, whereas $w(t)=\cos(\mu(t+R))$ for $t\in A$. When extended by zero on the graph $G^*$ obtained from $G$ by removing the loop $w$ is a function in $H^1(G^*)$. Its horizontal reflection $\tilde{w}$ is also in $H^1(G^*)$ and since $a\leq b$ those two functions are not simultaneously non-zero. As a result, taking as a test function $v$ on $G$ the extension by zero of $w-\tilde{w}$, we get
$$
\frac{\int_{G}|v'|^2}{\int_{G}v^2}=\mu^2.
$$
Furthermore $v$ has zero mean value by construction, and hence the first eigenvalue over $G$ is not greater than $\mu^2<\pi^2/(2R)^2$.
\end{proof}

The previous lemma will serve multiple times in the subsequent lines. As an example, it can be used to prove the following central remark according to which the second eigenvalue is simple, as long as the rays are elongated enough.

\begin{lemme}\label{lemme:simplicité}
Let $G$ be a caterpillar tree with a loop such that $l\leq 2R$. Then the second eigenvalue is simple.
\end{lemme}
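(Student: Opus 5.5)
We argue by contradiction: suppose the second eigenvalue $\nu^2>0$ has a two-dimensional eigenspace. Lemma~\ref{lemme:borne sup valeur propre} gives $\nu<\pi/(2R)$. Together with $l\leq 2R$ this yields $\nu l/2<\pi l/(4R)\leq \pi/2$, so $\sin(\nu l/2)\neq 0$ when $l>0$. Likewise $\nu R<\pi/2$ gives $\cos(\nu R)>0$. Hence neither the loop-localised eigenfunctions ($\nu=2k\pi/l$) nor the ray-localised ones ($\nu=(2k+1)\pi/(2R)$) can occur at this level. We then show directly that the kernel of $M$ is one-dimensional; this is easier than controlling the multiplicity of $\nu$ as a root of $f$, since the determinant alone does not detect multiplicity.

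First, the structure of an eigenfunction $u$ with $\cos(\nu R)\neq 0$. The first row of $M$ gives $\alpha_1=\alpha_2$, and the fourth gives $\alpha_3=\alpha_4$. The second row gives $\gamma_1=\alpha_1\cos(\nu R)$, and the third gives $\gamma_2=-2\alpha_1\sin(\nu R)$. Thus $u|_A$ is completely determined by $\alpha_1$, and symmetrically $u|_B$ is determined by $\alpha_3$. On the loop, rows nine and ten state that $u|_L$ solves $-u''=\nu^2u$ on $[0,l]$ with $u(0)=u(l)=u(J)$. Since $\sin(\nu l/2)\neq 0$, we have $\nu l\notin 2\pi\Z$, so $u|_L$ is uniquely determined by the value $u(J)$; explicitly, $u|_L(t)=u(J)\cos(\nu(t-l/2))/\cos(\nu l/2)$. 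The case $\cos(\nu l/2)=0$ is excluded because $\nu l/2<\pi/2$. Consequently $u$ is determined by the pair $(\alpha_1,\alpha_3)$, and the eigenspace injects linearly into $\R^2$.

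Second, we exclude dimension two. If the eigenspace were two-dimensional, this injection would be onto, so some eigenfunction would have $\alpha_1=0$ and $\alpha_3\neq 0$. Then $u$ vanishes identically on $R_1$, $R_2$ and $A$, so $u(J)=0$ and $u|_L(0)=u|_L(l)=0$. As $\nu l<\pi$, the Dirichlet problem on $L$ has only the trivial solution, so $u|_L\equiv0$. The Kirchhoff flux condition at $J$ then forces $u|_B'(J)=0$, and $u|_B(J)=0$ holds as well. Hence $u|_B\equiv0$, so $u(E)=0$, which gives $\alpha_3\cos(\nu R)=0$ and therefore $\alpha_3=0$, a contradiction. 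When $l=0$ the same argument applies after omitting the loop.

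The main obstacle is checking that the loop cannot support an independent mode. This rests on the non-resonance conditions $\nu l<\pi$ and $\nu R<\pi/2$, which follow from $l\leq 2R$ combined with Lemma~\ref{lemme:borne sup valeur propre}. The remaining steps are direct readings of the rows of $M$.
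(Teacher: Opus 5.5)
Your proof is correct and is essentially the paper's argument, written out in terms of the kernel of $M$. Reading $\alpha_1=\alpha_2$ and $\alpha_3=\alpha_4$ off the continuity rows (using $\cos(\nu R)\neq0$) replaces the paper's symmetric/antisymmetric decomposition, and your injection $u\mapsto(\alpha_1,\alpha_3)$, followed by propagation of zeros through $A$, the loop and $B$, is the paper's ``a second eigenfunction cannot vanish on a ray, yet two independent ones are colinear on each ray'' step. One wording slip: uniqueness of $u|_L$ given $u(J)$ requires $\nu l\notin\pi\Z$, not merely $\nu l\notin 2\pi\Z$, but your bound $0<\nu l<\pi$ supplies this.
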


\begin{remarque}\label{rmq:symétrie fonction propre graphe}
An inspection of the proof shows that the associated eigenfunction is symmetric with respect to vertical reflections and that it does not vanish identically on any ray. Actually, by symmetry and by Courant's principle it cannot vanish at all inside a ray. 
\end{remarque}

\begin{proof}
Let us first establish the symmetry property. Any eigenfunction $u$ can be decomposed as the sum of a symmetric and an antisymmetric eigenfunction (with
respect to vertical reflections). If the antisymmetric part were non-trivial, either its restriction to the concatenation of the edges $R_1$ and $R_2$ or $R_3$ and $R_4$ would be an eigenfunction there (indeed, the Kirchhoff condition would be satisfied at $W$ and $E$); or its restriction to the loop, if any, would be an eigenfunction of the \emph{Dirichlet} Laplacian. In the former case $\nu^2$ would be greater or equal than $\mu^2:=\pi^2/(2R)^2$, and in the latter it would be greater or equal than $\pi^2/l^2\geq\mu^2$, a contradiction with Lemma~\ref{lemme:borne sup valeur propre} in both cases. Therefore $u$ is symmetric.

Now we prove the simplicity. Observe first that $u$ cannot vanish identically on a ray of $G$. Indeed, otherwise by symmetry it would need to vanish on the reflection of the ray and hence on the corresponding bifurcation point, say $W$. In this case, by the Kirchhoff law the normal derivative of $u|_A$ would also vanish at $W$. By Holmgrem's theorem, the nullset of $u$ would then propagate over $A$ up to the junction point $J$.

If $l>0$ and if $u$ were not identically zero on $L$, the restriction of $u$ to the loop would be an eigenfunction of the \emph{Dirichlet} Laplacian, contradicting Lemma~\ref{lemme:borne sup valeur propre} as above.

If on the contrary $u$ vanishes on the loop or if $l=0$, the nullset of $u$ now propagates up to the bifurcation point $E$. As a result the restriction of $u$ to the concatenation of $R_3$ and $R_4$ is an eigenfunction, which is again a contradiction with Lemma~\ref{lemme:borne sup valeur propre}.

The conclusion of this discussion is that a second eigenfunction cannot vanish identically on a ray of $G$. As a result, if there were two independent eigenfunctions associated with $\lambda$, these eigenfunctions would be colinear on each ray by~\eqref{eq:fonction propre expression}, hence one could construct a third eigenfunction vanishing on a ray, a contradiction.
\end{proof}

In the next result, we prove that up to elongating enough the rays in $G$, we can locate the nodal set of the second eigenfunction in an arbitrary neighbourhood of the midpoint of the bridge of $G$. Furthermore, the result is uniform with respect to the position of the junction point.

\begin{proposition}[Localisation of the nodal set]\label{prop:construction du graph abstrait}
Let $G$ be a caterpillar tree with a loop of length $l=4\pi$ and a bridge $P$ of length $c=2$. For any $\epsilon>0$ there exists $R_0>0$ such that if the rays of $G$ are of length greater than $R_0$, the second eigenvalue is simple and the associated eigenfunction has a nodal set whose intersection with $P$ is nonempty and lies in $(-\epsilon,\epsilon)\subseteq [-1,1]\cong P$, independently of the position of the junction point over $P$.
\end{proposition}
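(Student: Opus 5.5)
By Lemma~\ref{lemme:simplicité}, simplicity holds as soon as $2R\geq l=4\pi$. For the nodal set I would argue by asymptotic expansion in the small parameter $\nu$, keeping every estimate uniform in $a\in[0,2]$. By Lemma~\ref{lemme:borne sup valeur propre} we have $\nu<\pi/(2R)\to0$. By Remark~\ref{rmq:symétrie fonction propre graphe} the eigenfunction is $r_{NS}$-symmetric, so in~\eqref{eq:fonction propre expression} we have $\alpha_1=\alpha_2=:\alpha$ and $\alpha_3=\alpha_4=:\beta$. Then $u(W)=\alpha\cos(\nu R)$ and $u(E)=\beta\cos(\nu R)$. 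Kirchhoff at $W$ and $E$ gives the outgoing slopes of the bridge: $u'(-1)=-2\alpha\nu\sin(\nu R)$ and $u'(1)=2\beta\nu\sin(\nu R)$, in the bridge parametrisation $t\in[-1,1]$. I normalise so that $\max(|\alpha|,|\beta|)=1$.

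The first step is to control $u$ on the short part $P\cup L$, which has length $2+4\pi$. Since $u''=-\nu^2u$ there and $|u|\lesssim 1+|u'|$ on a set of bounded length, a Gronwall-type argument gives the following. The function $u$ is $C^1$-close to an affine function on $P$, with $u'$ varying by $O(\nu^2)$ along $P$, and $u$ is nearly constant on $L$. The loop's net Kirchhoff contribution at $J$ equals $-\nu^2\int_L u=O(\nu^2)$. Hence the slope $s$ of $u$ on $P$ satisfies $s=-2\alpha\nu\sin(\nu R)+O(\nu^2)=-2\beta\nu\sin(\nu R)+O(\nu^2)$, and $u(E)-u(W)=2s+O(\nu^2)$.

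The second step extracts the relations between $\alpha$, $\beta$ and $\nu R$. I use orthogonality to constants, $\int_G u=0$. The rays contribute $2(\alpha+\beta)\sin(\nu R)/\nu$, while $P\cup L$ contributes $O(1)$. I first show that $\sin(\nu R)$ stays bounded below, i.e.\ $\nu R$ does not tend to $0$. This is the point I expect to be the main obstacle. If it failed, the slope relations would force $u$ to be almost constant on all of $G$, and the Rayleigh quotient would then be too small to be compatible with the zero-mean constraint. I would settle this by a contradiction/compactness argument along a sequence $R_n\to\infty$, $a_n\to a_*$, with rescaled variable $\nu_n R_n$. Granting it, we get $\alpha+\beta=O(\nu)$. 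Together with the normalisation this gives $\alpha\approx-\beta$ with $|\alpha|\approx1$, and then $s\approx-2\alpha\nu\sin(\nu R)$ is of exact order $\nu$. From $(\beta-\alpha)\cos(\nu R)=u(E)-u(W)=2s+O(\nu^2)$ we get $\cos(\nu R)=O(\nu)$, so $\nu R\to\pi/2$, which is consistent with the heuristic limit.

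The third step locates the zero on $P$. On $P$ we have $u(t)=m+st+O(\nu^2)$, where $m=\tfrac12(u(W)+u(E))=\tfrac12(\alpha+\beta)\cos(\nu R)=O(\nu^2)$. The function $u|_P$ is strictly monotone, since $u'=s+O(\nu^2)$ with $|s|\gtrsim\nu$. Moreover $u(\pm1)=\pm s+O(\nu^2)$ have opposite signs for $R$ large. So $u|_P$ has exactly one zero $t_0$, and $|t_0|\leq|m|/|s|+O(\nu)=O(\nu)=O(1/R)$. All constants depend only on $l$ and $c$, not on $a$. Hence $|t_0|<\epsilon$ for $R\geq R_0(\epsilon)$, uniformly in the position of the junction point, which is the claim.
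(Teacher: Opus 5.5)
\paragraph{Verdict.} Your proof is correct, and it takes a genuinely different route from the paper's.

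\paragraph{How the two routes differ.} The paper rescales the rays to unit length and imports $\nu\to\pi/2$ from the cited work of Berkolaiko--Latushkin--Sukhtaiev. It then treats $\nu$ as a root of the explicit secular function $f$ and uses the implicit function theorem to get the first-order expansion $\nu(r)=\pi(1-r)/2+o(r)$. Finally it reads the zero off the explicit form $\cos(\nu)\cos(\nu t)-2\sin(\nu)\sin(\nu t)$ of $u$ on $A$. That form is fixed by the data at $W$ alone, which is why the precise coefficient $\nu'(0)=-\pi/2$ is needed to place the zero at the midpoint.

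You never use the determinant. You combine the data at both bifurcation points. The zero is pinned near the midpoint because $m=\tfrac12(\alpha+\beta)\cos(\nu R)$ is a product of two $O(\nu)$ quantities while $|s|\gtrsim\nu$. So only leading-order information is needed ($\alpha\approx-\beta$ and $\nu R\to\pi/2$), and you derive it rather than cite it.

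Your route gives a self-contained argument with an explicit rate $|t_0|=O(1/R)$. Its bounds are visibly uniform in $a\in[0,2]$, and it does not depend on the particular values $l=4\pi$, $c=2$. The paper's route gives a sharper description of $\nu$ and of $u$ along the bridge.

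\paragraph{Three small corrections.}
\begin{enumerate}
\item \emph{Sign slip in Step~1.} From your own $u'(1)=2\beta\nu\sin(\nu R)$ you get $s=+2\beta\nu\sin(\nu R)+O(\nu^2)$, not $-2\beta\nu\sin(\nu R)+O(\nu^2)$. As written, once $\sin(\nu R)$ is bounded below it would force $\alpha\approx\beta$, contradicting Step~2. With the correct sign it reads $(\alpha+\beta)\sin(\nu R)=O(\nu)$. This is exactly the information in $\int_G u=0$, obtained by integrating $u''=-\nu^2u$ over $G$ and using Kirchhoff.
\item \emph{The step you flag as the main obstacle is easy.} It needs no Rayleigh-quotient argument. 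Suppose $\nu R\to0$ along a sequence. Your value relation gives $(\beta-\alpha)\cos(\nu R)=2s+O(\nu^2)=O(\nu^2R)+O(\nu^2)=O(1/R)$, hence $\beta-\alpha\to0$. The zero-mean relation, with $\sin(\nu R)/\nu\sim R$, gives $\alpha+\beta=O(1/R)$. Together these contradict $\max(|\alpha|,|\beta|)=1$. Alternatively, the standard bound $\nu\ge\pi/|G|$ for Kirchhoff graphs (Nicaise's inequality) gives $\liminf \nu R\ge\pi/4$ directly.
\item \emph{The loop.} On the loop, ``Gronwall'' is not quite the right tool, because no derivative data are prescribed at $J$. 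Use instead the explicit solution of the two-point problem, $u|_L(s)=u(J)\cos(\nu(s-l/2))/\cos(\nu l/2)$. It is valid since $\nu l<\pi$ once $R>2\pi$, and it gives $\int_L u=O(1)$.
\end{enumerate}
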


\begin{remarque}\label{remarque:simplicité}
The result holds with other values for the length and of the loop and of the bridge, but for simplicity we stick to this configuration. The reason for chosing these particular values will be made clearer in the next section, when the graph will be embedded in the plane.
\end{remarque}

\begin{proof}
The simplicity is given by Lemma~\ref{lemme:simplicité}, provided that $R_0\geq 2\pi$. Let $x\in(-1,1)\cong P$ denote the position of the junction point $J$. To prove the result, we rescale $G$ in such a way that the length of the rays is equal to one. In this setting, the length of the loop is now $l=2\pi r$, that of the bridge is $c=r$, while $a=r(1+x)/2$ and $b=r(1-x)/2$, for some small parameter $r>0$. In the next lines, we will analyse the asymptotic behaviour of $u$ on the bridge of $G$ in the limit $r\to0$. To do so, the first step is to understand the behaviour of the root $\nu$.

By \cite{berkolaiko-latushkin-sukhtaiev}, $\nu(r)$ converges to $\pi/2=:\nu(0)$, which corresponds to the second eigenvalue obtained by formally making the loop and the bridge of $G$ collapse to a point. In particular $\sin(\nu l/2)\neq0$ when $r$ is close enough but not equal to zero. Furthermore by Lemma~\ref{lemme:borne sup valeur propre}, $\nu(r)$ is stricly smaller than $\nu(0)$. This means that $\cos(\nu)\neq0$ whenever $r>0$. We conclude that $\nu$ is a root of the function $f$ defined in~\eqref{eq:determinant fonction}. By the implicit function Theorem (since $\partial_\nu f(\nu(0),1,0,0,0)\neq0$), $\nu$ is an analytic function of $(r,x)$, for $r$ close to zero and $x$ in any compact subset of $(-1,1)$. By computing the derivative of $f(\nu(r),1,a(r),b(r),l(r))=0$ at $r=0$, we see that $\nu'(0)=-\nu(0) = -\pi/2$. As a result, $\nu(r)=\pi(1-r)/2+\so(r)$. By analyticity of $\nu$ in $(r,x)$, this expansion is uniform with respect to $x$.

Now let us address $u$. To that end, we denote by $\alpha_1,...,\alpha_4,\gamma_1,\gamma_2,\delta_1,\delta_2,\lambda_1,\lambda_2$ the coefficients in its expression (cf.~\eqref{eq:fonction propre expression}). Observe that by Remark~\ref{rmq:symétrie fonction propre graphe}, $u$ does not vanish on the rays $R_1,...,R_4$, hence the coefficients $\alpha_1,...,\alpha_4$ are non-zero. Furthermore, by applying the conditions in~\eqref{eq:kirchhoff} at the bifurcation points $W$ and $E$, we find that
$$
(\alpha_1,\alpha_2,\gamma_1,\gamma_2),(\alpha_3,\alpha_4,\delta_1,\delta_2)\in \text{Vect}(X(\nu)),\qquad X(\nu):=
\begin{pmatrix}
1 \eqskip 1 \eqskip \cos(\nu) \eqskip -2\sin(\nu)
\end{pmatrix}.
$$
In view of~\eqref{eq:fonction propre expression}, this means that the restrictions of $u$ to $A$ and $B$ are non-trivial multiples of $\cos(\nu)\cos(\nu t)-2\sin(\nu)\sin(\nu t)$ for $t\in[0,a]$ and $t\in[0,b]$, respectively, though with multiplicative constants that may differ on $A$ and $B$. We now rescale the bridge to its original size by introducing $v_A(t)=u|_A(r(1+t)/2)$ and $v_B(t)=u|_B(r(1-t)/2)$, which are defined for $t\in[-1,x]$ and $t\in[x,1]$. Using the expansion of $\nu$ previously obtained we compute, up to the multiplicative constants,
$$
v_A(t)=\frac{\pi rt}{2}+\so(r),\qquad v_B(t)=\frac{\pi rt}{2}+\so(r),
$$
with $\so(r)$ which are now uniform in $x$ and $t$. This shows that $v_A$ and $v_B$ cannot vanish outside $(-\epsilon,\epsilon)$, when $r$ is small enough regardless $x$. In other words, the eigenfunction of $G$ vanishes on the bridge at a single point close to its midpoint as $r$ is small, uniformly with respect to $x$. Since we already argued that it cannot vanish on the rays, this concludes the proof.
\end{proof}

\section{Planar graphs with a sliding loop}

The next step towards the construction of a Neumann \fdsh~is to embed the caterpillar tree $G$, constructed in Proposition~\ref{prop:construction du graph abstrait}, in the plane. A trivial way to do so would be to embed the loop in the open upper half plane, and to keep the embedding of the bridge in the $x$-axis. The issue is that the planar graph obtained in this way, and subsequently the graph-like domain, would not be symmetric with respect to the $x$-axis, in contradiction with Definition~\ref{def:sliding domains}.

Therefore, one must embed the loop across the $x$-axis. The problem with this construction is that as the loop is of length $4\pi$ while the rays are potentially very long, they will most probably intersect each other, in which case the planar graph is not isometric to $G$ anymore. As explained in the introduction the solution is to sharply roll the rays over themselves. One must also pay attention to the fact that the loop must be able to slide between the bifurcation points without intersecting the rolled rays. This is guaranteed by the next lemma (see also Figure~\ref{fig:Steiner planaire}). Note that the choice of $l=4\pi$ and $c=2$ as done in Proposition~\ref{prop:construction du graph abstrait} is important here, since then the bridge is as short as the radius of a circle isometric to the loop. This allows to consider such a circle as the embedding of the loop and to let it slide along the bridge without taking the risk of any additional intersection.

\begin{lemme}\label{lemme:aplanissement}
Let $R>0$. There exists a family $(G_t)_{t\in(-1,1)}$ of planar graphs symmetric with respect to the $x$-axis such that for each $t\in(-1,1)$, $G_t$ is a caterpillar tree with a loop of length $l=4\pi$, with a bridge of length $c=2$ and with rays of length $R$. Furthermore, the subgraph obtained from $G_t$ by removing the loop is independent of\/ $t$ and its bridge is the line segment $[(-1,0),(1,0)]$. Lastly, the loop of $G_t$ is of the form $\mathcal{C}+(t,0)$, where $\mathcal{C}$ is a Jordan curve passing through the origin.
\end{lemme}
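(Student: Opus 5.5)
The construction is explicit, and it rests on one geometric observation. Take $\mathcal{C}$ to be the circle of radius $2$ centred at $(0,2)$. It passes through the origin, it has length $4\pi=l$, and it is symmetric under $(x,y)\mapsto(-x,y)$, but not under reflection across the $x$-axis. Symmetry with respect to the $x$-axis is required, so I would instead use the circle $\mathcal{C}$ of radius $2$ centred at $(-2,0)$, which passes through the origin, is symmetric about the $x$-axis, and meets the $x$-axis at $(0,0)$ and $(-4,0)$. The loop of $G_t$ is then $\mathcal{C}+(t,0)$, attached to the bridge at the junction point $J_t=(t,0)$.

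This choice forces a check: the loop must not meet the bridge except at $J_t$. The translated circle meets the $x$-axis only at $(t,0)$ and $(t-4,0)$. Since $t-4<-3<-1$, the second point lies outside the bridge $[(-1,0),(1,0)]$, so the loop meets the bridge only at the junction. The loop lies in the strip $\{-5<x<1\}$ and in the disc of radius $2$ about $(t-2,0)$. It therefore sweeps the region $K:=\bigcup_{t\in[-1,1]}\overline{D}((t-2,0),2)\subseteq[-5,1]\times[-2,2]$ and touches the closed segment $\{x\geq -1\}$ of the axis only at the bridge point $(t,0)$.

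The rays come next. They must be fixed curves, independent of $t$, of length $R$. Each must start at $W=(-1,0)$ or $E=(1,0)$, be mutually disjoint, avoid the bridge, and avoid $K$ except at its endpoint. They must also be symmetric: $R_2$ is the reflection of $R_1$ across the $x$-axis, and $R_4$ that of $R_3$. At $E$ this is easy, since $K$ lies in $\{x\leq 1\}$. I would let $R_3$ leave $E$ going up and to the right into the open quadrant $\{x>1,y>0\}$, and then roll it as a tight spiral, or zigzag, of total length $R$ inside a small square. This is the "rolling" described in the introduction, and it works for arbitrary $R$ because a curve of any length fits inside a disc by means of a sufficiently tight zigzag. $R_4$ is then the reflection of $R_3$.

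The west bifurcation point is the main obstacle, since $W=(-1,0)$ lies on the boundary of $K$: the circle for $t=1$ passes through $W$, and the circle for $t=-1$ is $\mathcal{C}-(1,0)$ with $J_{-1}=W$. The rays at $W$ must escape $K$ without crossing any loop. Every circle $\mathcal{C}+(t,0)$ with $t\in(-1,1)$ passes through $(t,0)$ and through $(t-4,0)$. Near $W$, these circles for $t$ close to $1$ have the upper arc passing near $(-1,\pm\sqrt{3})$, so the point $W$ lies strictly inside the disc when $t>-1$.

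This forces a modification, and my plan here is to flip the orientation. Use instead the circle centred at $(2,0)$, so that the loop is $\mathcal{C}+(t,0)$ with $\mathcal{C}$ the circle of radius $2$ about $(2,0)$. For the west point the symmetric problem then appears. The clean fix is to use a non-circular Jordan curve $\mathcal{C}$ of length $4\pi$ through the origin that is symmetric about the $x$-axis and lies in $\{x\geq 0\}\cup\{|y|\text{ large}\}$. For example, take a thin curve that goes vertically from the origin up to height $\pi$ and back, shifted slightly, so that $\mathcal{C}\subseteq\{|x|<\delta\}$. Its translates then sweep only the thin strip $[-1-\delta,1+\delta]\times[-\pi,\pi]$. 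Placing the rays at $W$ leaving horizontally to the left into $\{x<-1-\delta\}$, and then rolling them tightly in the quadrants $\{x<-2,y>0\}$ and its mirror, gives the required disjointness. The loop meets the bridge only at $J_t$, because the thin curve crosses the axis only at the origin.

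The finished argument verifies disjointness of the fixed rays from the swept strip, and this is elementary once the strip and the ray regions are separated by the vertical lines $x=\pm(1+\delta)$.
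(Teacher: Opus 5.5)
Your final construction fails for a topological reason. A Jordan curve symmetric with respect to the $x$-axis always meets the axis in at least two distinct points. Indeed, its interior is a connected symmetric open set, hence it meets the axis, and each component of that intersection is a bounded open interval whose two endpoints lie on the curve. So your thin $\mathcal{C}\subseteq\{|x|<\delta\}$ cannot cross the axis only at the origin. It has a second axis point $(p,0)$ with $|p|<\delta$, and for every $|t|<1-\delta$ the loop $\mathcal{C}+(t,0)$ meets the bridge at $(t+p,0)\neq J_t$. Then $G_t$ is not a caterpillar tree with a loop.

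No other choice of $\mathcal{C}$ rescues the strategy of keeping all rays outside all loops. Requiring $t+p\notin[-1,1]$ for all $t\in(-1,1)$ forces $|p|\geq 2$, which is why the loop must be long compared with the bridge. The open segment between $(t,0)$ and $(t+p,0)$ lies inside $\mathcal{C}+(t,0)$, and it then contains $E$ (if $p\geq2$) or $W$ (if $p\leq-2$) for every $t$. The two rays issuing from that bifurcation point may not cross any loop, so by the Jordan curve theorem they stay inside every loop. Your attempt to make them ``escape $K$'' at $W$ was therefore aiming at something impossible.

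The missing idea, which is what the paper uses, is to roll those rays up inside the common interior of all the loops. The paper takes the loop to be a radius-$2$ circle through $(t,0)$ enclosing $E$. It places $R_3$ in the convex set $Q\cap\bigcap_t D_t$, where $D_t$ is the disc bounded by the loop and $Q=\{x>1,y>0\}$. This set has nonempty interior and $E$ in its closure. The paper then obtains $R_1,R_2$ by reflection, so they lie in $\{x<-1\}$, outside every loop.

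Your first attempt, with $\mathcal{C}$ the circle of radius $2$ centred at $(-2,0)$, is the mirror image of this and needed only that fix at $W$. Put $R_1$ in the upper half of the open lens $D((-3,0),2)\cap D((-1,0),2)$, and take $R_2$ as its reflection. This lens has the following properties:
\begin{itemize}
\item by convexity of $c\mapsto(x-c)^2$, it is contained in every disc $D((t-2,0),2)$ with $t\in(-1,1)$;
\item it has $W$ in its closure;
\item it contains paths of any length $R$ starting at $W$.
\end{itemize}
Moreover, $W$ lies on none of the circles $\mathcal{C}+(t,0)$ for $t\in(-1,1)$. Combine this with your east rays in $\{x>1\}$, which avoid the loops because these lie in $\{x\leq t\}$. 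Note also that the second axis point $(t-4,0)$ lies off the bridge. With these, every requirement of the lemma is met.
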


\begin{proof}
The intersection of the set $Q=\{(x,y)\in\R^2, x>1, y>0\}$ with the open disks $D_t$ of radius two centered at $(3+t,0)$, namely
$$
Q\cap\bigcap_{-1<t<1}D_t
$$
is a convex planar set with nonempty interior. The point $E=(1,0)$ belongs to its closure. Therefore it contains a path $R_3$ of length $R$ with one of its endpoints being $E$. Reflect $R_3$ with respect to the $x$-axis and the $y$-axis to generate $R_4$ and $R_1$, respectively, and reflect $R_1$ with respect to the $x$-axis to define $R_2$. By construction the circle of radius two centered at $(3+t,0)$ never intersects $R_1,...,R_4$ when $t\in(-1,1)$. Hence the caterpillar tree $G_t$ with this circle as the loop, with the paths $R_1,...,R_4$ as the rays, and with the line segment $P=[(-1,0),(1,0)]$ as the bridge meets all the requirements.
\end{proof}

\begin{remarque}
It is clear from the proof that $G_t$ can be assumed to be smooth and non-degenerate, in the sense that each edge is a smooth line and that the edges arriving at a given vertex meet non-tangentially with respect to each other. We can even assume that the edges are locally straight near the vertex.
\end{remarque}

\newcommand{\fgsl}{\textup{FGSL}}
The next definition formalises the planar graphs exhibited in this section, as they are the main ingredient for constructing the Neumann \fdsh. We also mention that although not strictly necessary, it will be technically convenient to assume those graphs to be locally straight around each vertex. This assumption can be made without loss of generality, as already mentioned.

\begin{definition}
A family of graphs as in Lemma~\ref{lemme:aplanissement} is called a family of planar graphs with a sliding loop \textup{(\fgsl)}. We say that the family is smooth (resp. locally straight) if each graph is smooth (resp. locally straight around the vertices).
\end{definition}

\section{Graph-like domains with a sliding handle}\label{sec:graph-like domains}

In this section we construct a two-parameter family of domains obtained by thickening an \fgsl. One of the parameters is the sliding parameter $t\in(-1,1)$ while the other one controls the thickeness of the domains. We point out that the main purpose of the construction is that the resulting domains have their Neumann eigenvalues and eigenfunctions close to that of the graphs of the \fgsl, when the thickeness parameter is small enough. To that end, we will make use of the results in appendix~\ref{annexe:convergence}. In particular we need to ensure that the domains constructed converge to the graphs of the \fgsl\/ in the sense of Definition~\ref{def:convergence vers graph} as the thickeness parameter goes to zero. The domains also need to fulfil some additional symmetry and regularity properties that will be useful later. The existence of such a family of graph-like domains is guaranteed by the next proposition.

\newcommand{\A}{\mathcal{A}}
\begin{proposition}\label{prop:graph-like domains}
Let $(G_t)_{t\in(-1,1)}$ be a smooth and locally straight \fgsl. For any compact subset $T$ of $(-1,1)$, there exists a family of bounded open sets $(\Theta_{h,t})_{t\in T,0<h<h_0}$, for some $h_0>0$, such that:
\begin{enumerate}
\item For all $t\in T$ and all $0<h<h_0$, $\Theta_{h,t}$ is doubly connected and symmetric with respect to the $x$-axis.
\item For all $t\in T$, if $t_n\to t$ and $h_n\to 0$, $\Theta_{h_n,t_n}$ converges to $G_t$ in $C^\infty$, in the sense of Definition~\ref{def:convergence vers graph}.
\item For each $0<h<h_0$, the sets $(\Theta_{h,t})_{t\in T,0<h<h_0}$ are $C^\infty$ with a continuous and piecewise $C^1$ dependence on $t\in T$. This means that for all $t\in T$ there exists a $C^\infty$-diffeomorphisms $\Phi_{h,t}$ mapping the annulus $\A$ to $\Theta_{h,t}$, such that $(t\mapsto \Phi_{h,t},\Phi_{h,t}^{-1}\in C^\infty(\R^d,\R^d)^2)$ is continuous and piecewise $C^1$. Furthermore, $\Phi_{h,t}$ preserves the symmetry with respect to the $x$-axis.
\end{enumerate}
\end{proposition}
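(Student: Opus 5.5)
The plan is to thicken the graphs of the \fgsl\ edge by edge: build tubes around the edges, glue them along small vertex neighbourhoods, and smooth the result. Because every $G_t$ is obtained from one fixed graph by translating the loop rigidly, most of the work is done once.

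First fix $t$-independent data. Let $G^*$ be the common subgraph of Lemma~\ref{lemme:aplanissement}: four rays and the bridge $P=[(-1,0),(1,0)]$. The loop is $\mathcal{C}+(t,0)$, where $\mathcal{C}$ is the circle of radius $2$ through the origin (centre $(2,0)$ in the construction of that lemma). Since $T$ is compact in $(-1,1)$, there are uniform separation constants. The loop stays at positive distance from the rays. The junction $J_t=(t,0)$ stays at distance at least $\delta>0$ from $W$ and $E$. The loop meets $G^*$ only at $J_t$, and does so transversally: the circle is vertical there and the bridge horizontal.

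Next fix $h_0$ small compared with these constants and with the curvature of the edges. For each edge $e$ with unit normal $n_e$, take the tube $\{\gamma_e(s)+h\,\tau\,n_e(s):|\tau|<1/2\}$, truncated at distance $C h$ from its endpoint vertices. At each vertex insert a vertex neighbourhood of the form $h\,V_v$, where $V_v$ is a fixed smooth star-shaped model. This is possible because the edges are locally straight near the vertices, so the configuration near a vertex is exactly a union of straight half-strips, which can be rounded by a fixed smoothing of the union. At $W$ and $E$ the model is $t$-independent. At the junction the model is the rounded union of a horizontal strip and a vertical strip, translated by $(t,0)$. Choosing every model symmetric under $y\mapsto -y$, the whole set $\Theta_{h,t}$ is symmetric, since $G_t$ is. It is doubly connected because it is a thin neighbourhood of a graph whose only cycle is the loop. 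Convergence to $G_t$ in the sense of Definition~\ref{def:convergence vers graph} holds because the construction has exactly the scaled edge-plus-vertex structure that definition requires. If $t_n\to t$, the pieces move by the translation $(t_n-t,0)$, applied to the loop tube, the junction model, and the adjusted lengths of the $A$, $B$ tubes; all of these converge smoothly.

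For the diffeomorphisms, define $\Theta_{h,t}=\Psi_t(\Theta_{h,t^*})$ for a reference value $t^*\in T$. Here $\Psi_t$ is a diffeomorphism of $\R^2$ that commutes with $y\mapsto -y$ and satisfies three conditions. It equals the translation by $(t-t^*,0)$ on a neighbourhood of the loop and junction. It equals the identity near the rays and near $W$ and $E$. Between these regions it acts along the bridge as $(x,y)\mapsto(\phi_t(x),y)$, where $\phi_t$ is a one-dimensional increasing diffeomorphism reparametrising $[-1,1]$ and fixing neighbourhoods of $\pm1$.

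Taking $\phi_t$ smooth in $t$ makes $t\mapsto\Psi_t$ smooth. It fails to be an exact isometry on the bridge, so $\Psi_t$ compresses or stretches the bridge tube. The tube must therefore be redefined as the straight strip of width $h$ on the new bridge. This strip is still the image of the old one, because a horizontal map of the form $(\phi_t(x),y)$ preserves horizontal strips; so the set is still of the required form, and Definition~\ref{def:convergence vers graph} concerns the set, not the map. Composing with a fixed symmetric diffeomorphism $\Phi_{h,t^*}:\A\to\Theta_{h,t^*}$ gives $\Phi_{h,t}=\Psi_t\circ\Phi_{h,t^*}$, which is continuous, and even $C^1$, in $t$ in $C^\infty$.

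I expect the main obstacle to be the bookkeeping that checks Definition~\ref{def:convergence vers graph} exactly, including its vertex-neighbourhood requirements, uniformly in $t$. A subsidiary concern is keeping the translated junction region away from the fixed regions at $W$ and $E$. Uniform compactness of $T$ handles this second point once $h_0<\delta/(2C)$.
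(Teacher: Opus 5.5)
Your proposal is correct. Its fixed-$t$ part is essentially the paper's Step~1: tubes around the edges, $h$-scaled vertex models glued along straight pieces, $y$-even choices, and double connectedness by retraction onto $G_t$.

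The real difference is how you handle the parameter $t$. The paper uses $\Psi_t=\mathrm{id}+(t-t_0)\phi_{t_0}\vec{e_x}$, with $\phi_{t_0}$ cut off in an annulus of width $2h_0$ around the loop. This is a diffeomorphism only for $|t-t_0|$ small. The paper therefore covers $T$ by finitely many short intervals and glues the diffeomorphisms at the break points via $\Phi^{(k+1)}_{h,t}\circ(\Phi^{(k+1)}_{h,\tau_k})^{-1}\circ\Phi^{(k)}_{h,\tau_k}$, which is why it only gets piecewise $C^1$ dependence. It treats the bridge by computing the metric of $\Psi_{t_n}\circ F_{h_n,e_0}$ and reparametrising with $\varphi_t$.

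You instead use one $h$-independent ambient isotopy for all of $T$. This gives smooth dependence on $t$ with no patching. Your remark that $(x,y)\mapsto(\phi_t(x),y)$ maps straight strips to straight strips also lets you parametrise the bridge blocks directly, with metric $\mathrm{diag}(\lambda_n^2,h_n^2)$ and $\lambda_n\to1$.

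The price is that you only assert the existence of such a $\Psi_t$ for $|t-t^*|$ of order one, and it is not automatic. A map $\mathrm{id}+(t-t^*)\chi\vec{e_x}$ is a diffeomorphism only if $1+(t-t^*)\partial_x\chi>0$. Meanwhile $\chi$ must drop from $1$ near the loop to $0$ near the rays, which in Lemma~\ref{lemme:aplanissement} lie inside the loop.

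A clean fix is to take $\Psi_t$ as the flow, from time $t^*$ to $t$, of a horizontal $y$-even field $\chi_\tau\vec{e_x}$ with the following properties:
\begin{itemize}
\item $\chi_\tau=1$ on $N+(\tau-t^*,0)$, where $N$ is a fixed small neighbourhood of the loop and junction;
\item $\chi_\tau=0$ on a fixed neighbourhood $K$ of the rays, $W$ and $E$;
\item $\chi_\tau$ is independent of $y$ near the bridge.
\end{itemize}
Such a field exists because $N+(\tau-t^*,0)$ stays at uniform positive distance from $K$ for $\tau\in T$. The flow then translates $N$ exactly, fixes $K$, and preserves horizontal lines, so it gives the product form on the bridge. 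It needs no smallness of $|t-t^*|$.

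You also specialise to the circle of that lemma. For a general smooth, locally straight FGSL you only use the uniform separation and the transversality at $J_t$, and both follow from compactness of $T$. With this step filled in, your route proves the statement with $C^\infty$ rather than piecewise $C^1$ dependence on $t$.
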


\begin{remarque}
We believe that this kind of result should hold for more general families of graphs. More precisely, we expect that for any one-parameter family of metric graphs depending in a sufficiently smooth way on the parameter, one should be able to contruct a family of domains satisfying the last two properties of the proposition. Yet as can be seen from the proof (especially Step~2) we were unable to find an argumentation that does not rely on the specific geometry of the \fgsl.
\end{remarque}

\begin{proof}
The proof follows three main steps.

\medskip
\noindent\emph{Step 1 (Construction of the domains for a fixed $t\in T$)}: In this step, we consider a fixed value $t\in T$. We will construct $\Theta_{h,t}$ from the planar graph $G_t$ by blocks, i.e. we will build separately the neighborhoods $U_{v,h}$ around each vertex $v$ of $G_t$ and the neighborhoods $U_{e,h}$ around each edge $e$ of $G_t$ involved in Definition~\ref{def:convergence vers graph}. Let us begin with the case of an edge $e$, following closely \cite[Section~3.1]{post}. We consider $\psi:[-l_e/2,l_e/2]\to \overline{e}\subseteq\R^2$ to be an arc-length parametrization of $e$. This means that $\psi'$ is a unit vector of $\R^2$. Note that $\psi$ is smooth by smoothness of $G_t$. We then define for $(x,y)\in [-l_e/2,l_e/2]\times[0,1]=\overline{U}_e$ the tubular transformation
$$
F_{h,e}(x,y)=\psi((1-\alpha h)x)+h(y-1/2)\vec{n}((1-\alpha h)x),
$$
where $\vec{n}=(\psi_2',-\psi_1')$ is a normal unit vector of $e$ and $\alpha>0$ is a security parameter (to be chosen later) ensuring that the neighborhhoods of the different edges do not interesect. When $h_0$ is small enough, this transformation is smooth and injective, by smoothness and injectivity of $\psi$. Furthermore, the matrix $(DF_{h,e})^TDF_{h,e}$ is
$$
\begin{pmatrix}
(1-\alpha h)^2[(\psi_1'+yh\psi_2'')^2+(\psi_2'-yh\psi_1'')^2] & h^2(1-\alpha h)y(\psi_1''+\psi_2'') \\
h^2(1-\alpha h)y(\psi_1''+\psi_2'') & h^2(\psi_1'^2+\psi_2'^2)
\end{pmatrix}
$$
which satisfies the asymptotic expansion of Definition~\ref{def:convergence vers graph} since $\psi_1'^2+\psi_2'^2=1$ (note that here and below, for readability we dropped the point $(1-\alpha h)x$ where the functions $\psi_i',\psi_i'',i=1,2$ are evaluated). Since $\psi$ is smooth the asymptotic expansion holds in $C^\infty(U_e)$. Thus, it is enough to consider $F_{h,e}(U_e)$ as the neighborhood $U_{e,h}$ of $e$. It now remains to construct the neighborhood $U_{v,h}$ for a vertex $v$ of $G_t$ and to make sure that we obtain a smooth domain by gluing it with the adjacent edge neighborhoods. If $d\in\{1,3,4\}$ is the degree of $v$ and $e_1,...,e_d$ are the adjacent edges, we consider the segments $\sigma_1(h),...,\sigma_d(h)$ given by $\sigma_i(h):=F_{e_i,h}(\{l_{e_i}/2\}\times (0,1))$, with the convention that the parametrisation $\psi$ of the edge $e_i$ used in the construction of $F_{e_i,h}$ is such that $\psi(l_{e_i}/2)=v$. Since $G_t$ is locally straight, $\sigma_i$ is an open segment of length $h$ whose midpoint $\psi((1-\alpha h)l_{e_i}/2)$ is at distance $\alpha h l_{e_i}/2$ from the vertex $v$, when $h$ is small (see Figure~\ref{fig:Uv}).

\begin{figure}[h]
\centering
\includegraphics[height=.40\textheight]{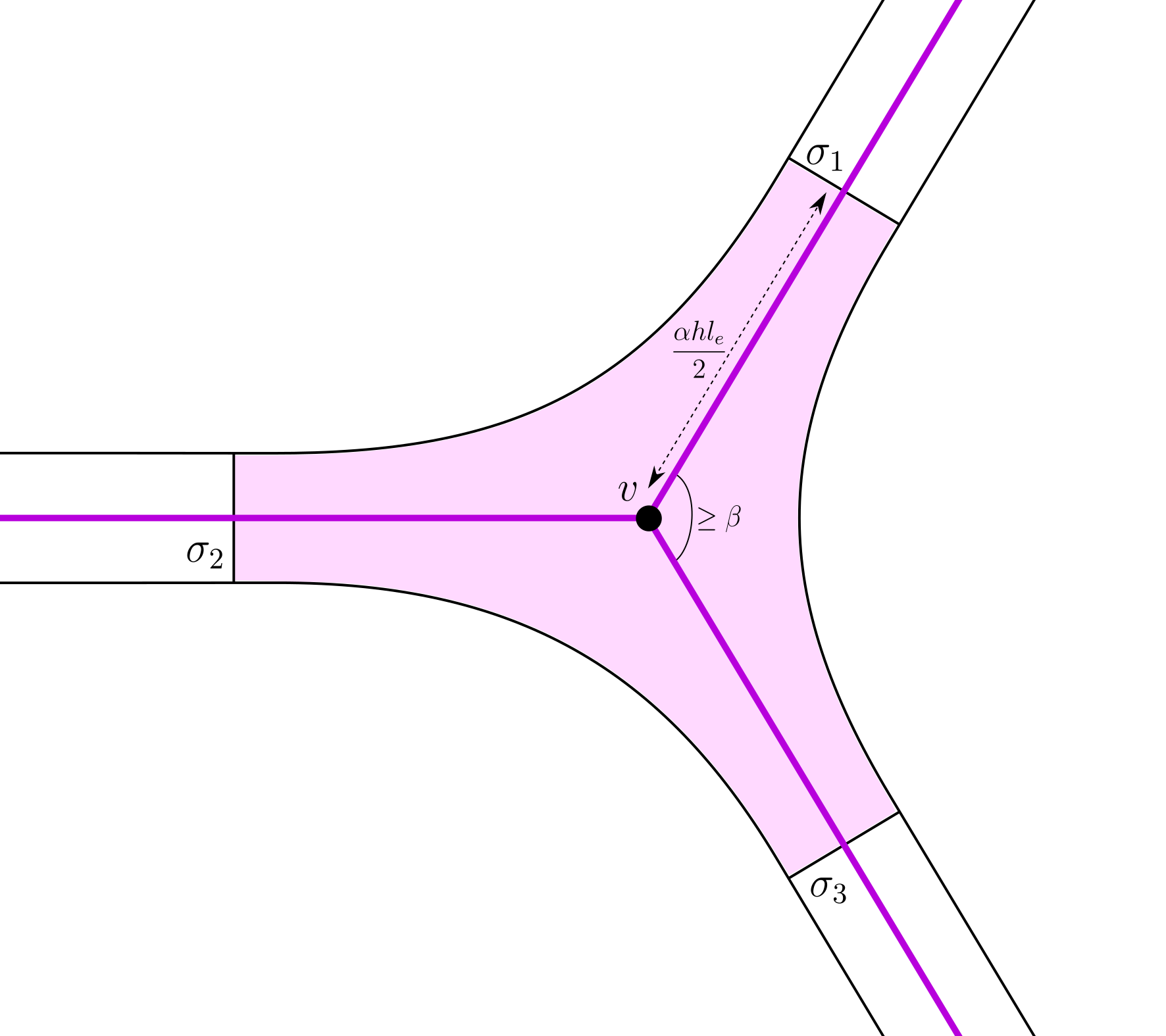}
\caption{Vertex neighborhood constructed in Step 1 of the proof of Proposition~\ref{prop:graph-like domains}. The graph $G_t$ is in dark pink while the neighborhood $U_{v,h}$ is in light pink.}
\label{fig:Uv}
\end{figure}

Let $\beta>0$ be the minimum of the angles between two adjacent edges and $\delta>0$ the minimal length of the edges in $G_t$. One checks that as long as $\tan(\beta/2)>1/(\alpha \delta)$, which can always be achieved for some large enough $\alpha$, the segments $\sigma_1,...,\sigma_d$ never intersect in the neighborhood of $v$ for all $0<h<h_0$. In particular the edge neighborhoods constructed previously never intersect. Furthermore, the pre-image of $\sigma_i$ by the contraction map $F_{v,h}(x):=hx+v$ is a fixed segment $\Sigma_i$ independent of $h$ for $i=1,...,d$. And the pre-image of the edge neighborhood $U_{e_i,h}$ by $F_{v,h}$ coincides with a rectangle near $\Sigma_i$. Besides, note that the scaling map $F_{v,h}$ satisfies the requirements of Definition~\ref{def:convergence vers graph}.

To construct $U_v$ we now just connect the pulled back segments $\Sigma_i$ for $i=1,...,d$\/ to form a $C^\infty$ Jordan curve except at the endpoints of $\Sigma_i$, where the curve locally coincides with a corner of internal angle $\pi/2$. The open set $U_v$ is the interior of the Jordan curve obtained in this way and we define $U_{v,h}$ as the image of $U_v$ by $F_{v,h}$. Note that by construction the junction between $U_v$ and $F_{v,h}^{-1}(U_{e_i,h})$ is a straight line, hence the set 
$$
\overline{\Theta}_{h,t}=\bigcup_{v\in V} \overline{U}_{v,h}\cup\bigcup_{e\in E}\overline{U}_{e,h},
$$
is $C^\infty$ regular, as desired. The resulting domain $\Theta_{h,t}$ is the interior of this set. Of course, since $G_t$ is symmetric with respect to the $x$-axis we shall assume without loss of generality that $\Theta_{h,t}$ is also symmetric. Furthermore, it is doubly-connected since it retracts by deformation on $G_t$ which is itself doubly-connected.

\medskip\noindent
\emph{Step 2 (Extension of the family of domains locally around a fixed $t_0\in T$)}: In the previous step the translation parameter $t$ was fixed. Since the second and the third points of the proposition require some regularity as $t$ varies, it is not enough to perform the previous construction for each $t\in T$ independently. Instead, we will use the domains $\Theta_{h,t_0}$ generated above for some fixed $t_0\in T$ and perform local translations to generate $\Theta_{h,t}$ for all $t$ close to $t_0$. Inspired by the proof of point (7) in \cite[Lemma~4.1]{freitas-leylekian}, we consider the transformation $\Psi_t:=\text{id}+(t-t_0)\phi_{t_0}\vec{e_x}$ where $\phi_{t_0}$ is (a smooth function symmetric with respect to the $x$-axis) supported in an annulus of width $2h_0$ around the loop of $G_{t_0}$, and equal to one in an annulus of width $h_0$ around that loop. This transformation has the effect of translating horizontally the loop of $G_{t_0}$ and the points nearby, but leaving invariant the rest of the plane. Actually, when $t$ is close enough to $t_0$, $\Psi_t$ is a $C^\infty$-diffeomorphism (by the global inverse function Theorem) depending smoothly on $t$, preserving the symmetry with respect to the $x$-axis, and sending $G_{t_0}$ to $G_t$ (the last point requires that $h_0$ is small enough, independently of $t_0,t\in T$, the set $T$ being compact, so that the rays of $G_{t_0}$ are left invariant through $\Psi_t$). We even have that the edges and the vertices of $G_t$ are the images of that of $G_{t_0}$ through $\Psi_t$. Therefore we construct $\Theta_{h,t}$ as the image of $\Theta_{h,t_0}$ through $\Psi_t$ for $t$ close to $t_0$ and for all $0<h<h_0$.

With this construction, we shall prove the third point of the proposition in a straightforward way. Indeed, it is enough to consider a $C^\infty$-diffeomorphism $\Phi_h$ mapping $\A$ to $\Theta_{h,t_0}$, preserving the symmetry with respect to the $x$-axis, and then to define $\Phi_{h,t}:=\Psi_t\circ\Phi_h$. Observe that the dependence of $\Phi_{h,t}$ on $t$ is regular since $\Psi_t$ depends linearly on $t$. The regularity of the inverse follows from the implicit function Theorem, applied to $(t,\Phi)\mapsto \Phi\circ\Phi_{h,t}$.

For the second point of the proposition, we need to argue that for each $t$ close to $t_0$ and for any sequence $(t_n,h_n)\to(t,0)$ the domains $\Theta_{h_n,t_n}$ converge to $G_t$. Naturally, we will use the sets $U_{n,v_n}:=\Psi_{t_n}(U_{v_0,h_n})$ and $U_{n,e_n}:=\Psi_{t_n}(U_{v_0,h_n})$ as the blocks in the decomposition~\eqref{eq:decomposition} of $\Theta_{h_n,t_n}$, where $v_n=\Psi_{t_n}(v_0)$ and $e_n=\Psi_{t_n}(e_0)$. To do so, we need to construct the maps $F_{n,v}$ sending a fixed reference open set $U_v$ to $U_{n,v_n}$ and the maps $F_{n,e}$ sending $(-l_e/2,l_e/2)\times(0,1)$ to $U_{n,e}$, for some vertex $v$ and some edge $e$ of $G_t$. Concerning $F_{n,v}$ there is no issue and we can simply take $v:=\Psi_t(v_0)$ and $F_{n,v}:=\Psi_{t_n}\circ F_{v_0,h_n}$, where $F_{v_0,h_n}=h_n\text{id}+v_0$ is the diffeomorphism that was defined to prove the convergence of $\Theta_{h,t_0}$ to $G_{t_0}$ in the first step of the proof (note that $v_0$ is the vertex of $G_{t_0}$ given by $v_0=\Psi_t^{-1}(v)=\Psi_{t_n}^{-1}(v_n)$). The function $F_{n,v}$ maps the reference set $U_{v_0}$ to $U_{n,v}$ and satisfies the first point of Definition~\ref{def:convergence vers graph} since

$$
(DF_{n,v})^TDF_{n,v}=h_n^2(D\Psi_{t_n})^T(D\Psi_{t_n}).
$$

For the edges, we observe that most of the sets $U_{n,e_n}$ coincide or are just translations of the set $U_{e_0,h_n}$, where $e_0=\Psi_{t_n}^{-1}(e_n)$. This actually holds for all the edges but $A_{t_n}$ and $B_{t_n}$, namely the two edges forming the bridge of $G_{t_n}$. For the translated edges, it is enough to compose $F_{e_0,h_n}$ with a translation to define $F_{n,e}$.

Thus it remains only to address the case where the edge $e$ of $G_t$ is $A_{t}$ or $B_{t}$. To simplify the discussion, we assume without loss of generality that in some neighborhood (of width $h_0$) of the $x$-axis, the function $\phi_{t_0}$ used in the definition of $\Psi_{t_0}$ does not depend on the $y$-variable. Furthermore, we recall that the functions $\psi_1$ and $\psi_2$ involved in the definition of $F_{h,e_0}$ in the first step of the proof satisfy $\psi_1'\equiv \pm 1$ and $\psi_2'\equiv 0$, since here $e_0$ is just a straight horizontal segment, namely $\Psi_t^{-1}(A_t)$ or $\Psi_t^{-1}(B_t)$. Thanks to the previous properties we realise that the pull-back of the Euclidean metric through the transformation $\Psi_{t_n}\circ F_{e_0,h_n}$ has a simple form, given by the matrix
$$
\begin{pmatrix}
(1-\alpha h_n)^2(1+(t_n-t_0)\phi_{t_0}')^2 & 0 \\
0 & h_n^2
\end{pmatrix},
$$
where $\phi_{t_0}'$ stands for $\partial_x\phi_{t_0}$. Observe that $(1+(t_n-t_0)\phi_{t_0}')$ is nothing else but the derivative of $\Psi_{t_n}|_\R^\R\circ\psi_1:[-l_{e_0}/2,l_{e_0}/2]\to \overline{e}_n\subseteq\R$. Therefore we consider the function $\varphi_t:=\psi_{1,t}^{-1}\circ \Psi_t|_\R^\R\circ\psi_1$, where $\psi_{1,t}$ corresponds to the first coordinate in the arc-length parametrisation of $\overline{e}$. This is a smooth diffeomorphism from $[-l_{e_0}/2,l_{e_0}/2]$ to $[-l_{e}/2,l_{e}/2]$ whose inverse has the derivative ${\varphi_t^{-1\prime}}=(1+(t-t_0)\phi_{t_0}')^{-1}$. Therefore the transformation defined by $F_{n,e}(x,y):=\Psi_{t_n}\circ F_{h_n,e_0}(\varphi_t^{-1}(x),y)$ for $(x,y)\in(-l_e/2,l_e/2)\times(0,1)$ is a smooth diffeomorphism such that $(DF_{n,e})^TDF_{n,e}$ has the following form:
$$
\begin{pmatrix}
(1-\alpha h_n)^2\frac{(1+(t_n-t_0)\phi_{t_0}')^2}{(1+(t-t_0)\phi_{t_0}')^2} & 0 \\
0 & h_n^2
\end{pmatrix}.
$$
Since $t_n\to t$ this proves the convergence demanded in the second point of Definition~\ref{def:convergence vers graph}. Furthermore, the expansion holds in $C^\infty(U_e)$, by smoothness of $\phi_{t_0}$. To sum up, in this step we constructed for all $t$ in a neighborhood of $t_0$ and for all $0<h<h_0$ the domains $\Theta_{h,t}$ with the three properties of the proposition, namely the double-connectedness and the symmetry with respect to the $x$-axis; the $C^\infty$ convergence of $\Theta_{h_n,t_n}$ to $G_t$ as $(t_n,h_n)\to (t,0)$; and the smoothness with a $C^1$ dependence on $t$ for each fixed value of $h$. We point out that the size of the neighborhood around $t_0$ is actually independent of $t_0$. It corresponds to the size of the neighborhood in which $\Psi_t$ remains a diffeomorphism. Similarly the smallness of $h_0$ does not depend on the choice of $t_0$.

\medskip\noindent
\emph{Step 3 (Construction of a global family of domains for all $t\in T$):} To conclude we need to turn the local construction of the second step into a global construction. To do so, we begin with $t_0\in T$. By the first and the second step we know that there exists $\epsilon$ and $h_0$ such that we shall construct the family $\Theta_{h,t}$ for all $0<h<h_0$ and all $t\in[t_0-\epsilon, t_0+\epsilon]$. We now recommence from $t_0+\epsilon$. Namely, thanks to the second step we extend the family $\Theta_{h,t_0+\epsilon}$ into a family $\Theta_{h,t}$ for all $t\in[t_0+\epsilon,t_0+2\epsilon]$ and all $0<h<h_0$. Note that we used that the neighborhood in which the extension can be performed is of the same size for $t_0+\epsilon$ and for $t_0$. Then we recommence from $t_0+2\epsilon$ and so on. Since $T$ is compact, the process ends up in a finite number of iterations. The domains $\Theta_{h,t}$ constructed in this way satisfy the three requirements of the proposition for all $t\in T$ except possibly for $t\in t_0+\epsilon\Z$. Let $\tau_k=t_0+k\epsilon$ be one of those exceptional points. Since $\tau_k\in [\tau_{k-1},\tau_k]$ the first requirement holds. The second also holds since for $(t_n,h_n)\to (\tau_k,0)$, we may split the sequence into the subsequences $(t'_n,h'_n)$ and $(t''_n,h''_n)$ where $t'_n\in[\tau_{k-1},\tau_k]$ while $t''_n\in[\tau_k,\tau_{k+1}]$. But $\Theta_{h'_n,t'_n}$ and $\Theta_{h''_n,t''_n}$ both converge to $G_{\tau_k}$ in the sense of Definition~\ref{def:convergence vers graph}, thus we conclude that also $\Theta_{t_n,h_n}$ converges to $G_{\tau_k}$. For the last requirement, we know that we have two families of transformations $\Phi_{h,t}^{(k)}$ and $\Phi_{h,t}^{(k+1)}$ mapping $\A$ to $\Theta_{h,t}$ for $t\in [\tau_{k-1},\tau_k]$ and $t\in [\tau_k,\tau_{k+1}]$, respectively. To obtain a single family $\Phi_{h,t}$ of transformations on the union of the intervals we define $\Phi_{h,t}:=\Phi_{h,t}^{(k)}$ for $t\in[\tau_{k-1},\tau_k]$ and $\Phi_{h,t}:=\Phi_{h,t}^{(k+1)}\circ(\Phi_{h,\tau_k}^{(k+1)})^{-1}\circ \Phi_{h,\tau_k}^{(k)}$ for $t\in [\tau_k,\tau_{k+1}]$. The resulting diffeomorphism depends continuously on $t$ on the reunion $[\tau_{k-1},\tau_{k+1}]$ and it is still $C^1$ in $t$ on $[\tau_{k-1},\tau_k]$ and on $[\tau_k,\tau_{k+1}]$. This concludes the proof.

\end{proof}

Thanks to the family of graph-like domains of Proposition~\ref{prop:graph-like domains} and to the results of appendix~\ref{annexe:convergence}, we are equipped to construct a Neumann \fdsh, and hence to prove Proposition~\ref{prop:FDSH}.

\begin{proof}[Proof of Proposition~\ref{prop:FDSH}]
Let us consider the planar \fgsl\/ $(G_t)_{t\in (-1,1)}$ of Lemma~\ref{lemme:aplanissement} with rays of length $R>R_0$ where $R_0$ is given by Proposition~\ref{prop:construction du graph abstrait}. In this way the nodal set of the second Neumann eigenfunction over $G_t$ lies in $I_\epsilon:=((-\epsilon,0),(\epsilon,0))$, which is a proper subset of the bridge $[W,E]$ of $G_t$, for all $t\in(-1,1)$. Without loss of generality, we assume the \fgsl\/ to be smooth and locally straight. Then, by Proposition~\ref{prop:graph-like domains} there exists a family $(\Theta_{h,t})_{t\in(-1,1),0<h<h_0}$ of smooth doubly-connected and symmetric graph-like domains, converging to the \fgsl\/ as $h\to0$, and depending on $t$ in a continuous and piecewise $C^1$ way. We claim that for any compact subset $T$ of $(-1,1)$, as long as $h_0$ is small enough, the second eigenvalue of $\Theta_{h,t}$ is simple for all $0<h<h_0$ and all $t\in T$. This can be seen by contradiction since otherwise we would have a sequence $(t_n,h_n)\to(t,0)$, for some $t\in T$, such that $\Theta_{t_n,h_n}$ has a multiple second eigenvalue. But in view of Theorem~\ref{thm:convergence} in appendix, this would mean that $G_t$ has also a multiple first eigenvalue, which would contradict Proposition~\ref{prop:construction du graph abstrait}, due to the choice of $R>R_0$. In what follows we choose $T$ as a compact interval containing $(-\epsilon,\epsilon)$ strictly, in such a way that the junction point $J_t$ of $G_t$ remains at a posivite distance of $I_\epsilon$ when $t$ is close to $\inf T$ and of $\sup T$. We then define the family of domains $\Omega_t:=\Theta_{h_0/2,t}$, for all $t\in T$.

By construction the domains $\Omega_t$ are smooth, doubly-connected, symmetric with respect to the $x$-axis, and have a simple second eigenvalue. In other words they satisfy the first and the second points in the definition of a Neumann \fdsh. To prove the third point we use the transversal convergence of the eigenfunctions over $\Theta_{h,t}$ as $h$ goes to zero, provided by Proposition~\ref{prop:convergence}. This will show that (up to shrinking $h_0$) the second eigenfunction over $\Omega_t$ must change sign along the open line segment $(W,E)$. Indeed, assume by contradiction that there were a sequence $(t_n,h_n)\to(t,0)$ for some $t\in T$ such that the second eigenfunction $u_n$ over $\Theta_{h_n,t_n}$, normalised in $L^2(\Theta_{h_n,t_n})$, does not change sign on $(W,E)$. Let $v_n^A$ and $v_n^B$ denote the pull-back of $u_n$ by $F_{A,h_n}$ and $F_{B,h_n}$ respectively, where $A$ and $B$ are the horizontal edges (of length $a$ and $b$) forming the bridge of $G_t$ and where $F_{A,h_n}$ and $F_{B,h_n}$ are the transformations involved in the convergence of $\Theta_{h_n,t_n}$ to $G_t$ (see Definition~\ref{def:convergence vers graph}). By hypothesis for all $x_0\in(-a/2,a/2)$ and all $x_1\in(-b/2,b/2)$, there exist $y_0^n,y_1^n\in (0,1)$ such that $v_n^A(x_0,y_0^n)$ and $v_n^B(x_1,y_1^n)$ are of same sign. Yet for a.e. $x_0$ and $x_1$, Proposition~\ref{prop:convergence} shows that $\sqrt{h_n}v_n^A(x_0,\cdot)\to u|_A(x_0)$ and $\sqrt{h_n}v_n^B(x_1,\cdot)\to u|_B(x_1)$, uniformly in $[0,1]$. Here $u$ is the second $L^2$-normalised eigenfunction over $G_t$, and its restrictions to $A$ and $B$ are considered as functions over $(-a/2,a/2)$ and $(-b/2,b/2)$ respectively, following Remark~\ref{remarque:convergence vers graphe}. As a consequence $u|_A(x_0)$ and $u|_B(x_1)$ are also of same sign. This contradicts Proposition~\ref{prop:construction du graph abstrait} and the choice of $R>R_0$.

As a result, the first eigenfunction on $\Omega_t$ changes sign along $(W,E)$ for all $t\in T$, up to shrinking $h_0$ and the third point in Definition~\ref{def:sliding domains} holds true. The fourth point follows from the fact that the diffeomorphisms $\Phi_t:=\Phi_{h_0/2,t}$ provided by Proposition~\ref{prop:graph-like domains}, together with their inverse, depend continuously and piecewise $C^1$ over $t$. Indeed, after an adaptation of \cite[Lemma~A.1]{freitas-leylekian} to the Neumann case -- see Lemma~\ref{lemme:continuité fonction propres par rapport à t} below -- one obtains that
$$t\in T\mapsto u_t\circ\Phi_t\in H^2(\A)\subseteq C(\overline{\A})$$
is continuous (and piecwise $C^1$), for some family of eigenfunctions $u_t$ over $\Omega_t$. Now it only remains to check the fifth and last point in Definition~\ref{def:sliding domains}. Let $t_-\in T$ be such that the junction point of $G_{t_-}$ lies at the left of $I_\epsilon$, namely the segment where the nodal set of the eigenfunction of $G_{t_-}$ is located. Such a $t_-$ exists in view of our demands on the compact set $T$. Similarly $t_+\in T$ is taken such that the junction point of $G_{t_+}$ is at the right of $I_\epsilon$. Observe that in this setting we have $I_\epsilon\subseteq B$ for $t=t_-$ and $I_\epsilon \subseteq A$ for $t=t_+$. Thus the identification $B\cong[-b/2,b/2]$ naturally maps the segment $I_\epsilon$ to some interval $(\beta_1,\beta_2)\subseteq(-b/2,b/2)$ for $t=t_-$, while the identification $A\cong[-a/2,a/2]$ maps this segment to $(\alpha_1,\alpha_2)\subseteq(-a/2,a/2)$ for $t=t_+$. We shall now prove that the nodal set over $\Omega_{t_\pm}$ is also located around $I_\epsilon$ (up to shrinking $h_0$). Indeed, let $h_n\to 0$, consider the normalised eigenfunction over $\Theta_{h_n,t_+}$ and pull it back through the transformation $F_{A,h_n}$ to define $v_n^A$. Once again, by the strong transversal convergence of Proposition~\ref{prop:convergence}, $\sqrt{h_n}v_n^A(x,\cdot)$ converges uniformly to $u|_A(x)$, for a.e. $x\in(-a/2,a/2)$. Take first $x=x_1$ for some point $x_1$ close to $\alpha_1$ and where the previous convergence holds. When $n$ is large enough the sign of $v_n^A(x_1,\cdot)$ is dictated by the sign of $u|_A(x_1)$. Then, take $x=x_2$ with $x_2$ close to $\alpha_2$, so that the sign of $v_n^A(x_2,\cdot)$ is the same as that of $u|_A(x_2)$. Since $u|_A(x_1)$ and $u|_A(x_2)$ have opposite sign, $v_n^A$ also have opposite sign on the section $\{x_1\}\times(0,1)$ and on the section $\{x_2\}\times(0,1)$. Hence its nodal line must be contained in $(\alpha_1,\alpha_2)\times(0,1)$. This shows in turn that the nodal line of the eigenfunction over $\Theta_{h_n,t_-}$ touches only the left boundary component of $\Theta_{h_n,t_-}$, that is the connected component of the boundary  which meets $U_{h_n,A}$. Analogously the nodal line over $\Theta_{h_n,t_+}$ touches exclusively the right boundary component, i.e. the connected component of the boundary meeting $U_{h_n,B}$. We recall that $U_{h_n,A}$ and $U_{h_n,B}$ are the blocks associated to the edges $A$ and $B$, respectively, in the convergence of $\Theta_{h_n,t_\pm}$ to $G_{t_\pm}$. Up to shrinking $h_0$, this shows that the nodal line over $\Omega_{t_-}$ meets the right boundary component whereas it touches the left one over $\Omega_{t_+}$. Since (up to an inversion) $\Phi_{t_\pm}^{-1}$ maps those boundary components respectively to the inner and to the outer boundary components of $\A$ for all $t\in T$, this proves the last requirement in Definition~\ref{def:sliding domains}. In other words $(\Omega_t)_{t\in T}$ constitutes a Neumann \fdsh.
\end{proof}

\begin{lemme}\label{lemme:continuité fonction propres par rapport à t}
Let $D$ be a $C^k-$regular bounded open set ($k\geq2$) and $\Phi_t:\R^d\to\R^d$ be a family of $C^k$-diffeomorphisms, for $t$ in an open interval $T$, such
that $(T\ni t\mapsto\Phi_t,\Phi_t^{-1}\in C^k(\R^d,\R^d)^2)$ is $C^1$. Define $\Omega_t:=\Phi_t(D)$ and assume that for $t_0\in T$, $\lambda$ is a simple eigenvalue
on $\Omega_{t_0}$. Then, for $t$ in some neighborhood of $t_0$, there exists a family of eigenvalues $\lambda_t$ on $\Omega_t$ and a family
of associated eigenfunctions $u_t$ such that $\lambda_{t_0}=\lambda$ and such that the following function is $C^1$ in the neighborhood of $t_0$:
$$
t\mapsto (\lambda_t,u_t\circ\Phi_t)\in \R\times H^k(D)
$$
\end{lemme}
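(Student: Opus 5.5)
The plan is to pull everything back to the fixed domain $D$ and apply the implicit function theorem, as in \cite[Lemma~A.1]{freitas-leylekian}. The only change from that reference is that the Dirichlet condition becomes a Neumann condition, which is also transported by $\Phi_t$.

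\emph{Step 1: the transported operator.} For $v\in H^k(D)$ set $u=v\circ\Phi_t^{-1}$ and define
$$
L_t v:=(\Delta(v\circ\Phi_t^{-1}))\circ\Phi_t,\qquad B_t v:=(\nabla(v\circ\Phi_t^{-1})\cdot n_t)\circ\Phi_t|_{\partial D},
$$
where $n_t$ is the outer unit normal of $\Omega_t$. By the chain rule,
$$
L_t v=\dv\bigl(A_t\nabla v\bigr)/J_t,\qquad B_t v=(A_t\nabla v\cdot n)/|{\rm cof}(D\Phi_t)\,n|,
$$
with $A_t=J_t\,(D\Phi_t)^{-1}(D\Phi_t)^{-T}$, $J_t=|\det D\Phi_t|$, and $n$ the normal of $D$. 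The coefficients involve $D\Phi_t$ and $(D\Phi_t)^{-1}\circ\Phi_t$, so they lie in $C^{k-1}$ and depend $C^1$ on $t$ in the relevant norms. This uses that $t\mapsto\Phi_t,\Phi_t^{-1}$ is $C^1$ into $C^k$. Consequently $t\mapsto(L_t,B_t)\in\mathcal L\bigl(H^k(D),H^{k-2}(D)\times H^{k-3/2}(\partial D)\bigr)$ is $C^1$. The multiplication estimates need the coefficients in $C^{k-1}$; for $k=2$ this is just bounded multipliers, which is fine.

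\emph{Step 2: the implicit function map.} Let $v_0=u\circ\Phi_{t_0}$, where $u$ is an $L^2$-normalised eigenfunction for $\lambda$. Define
$$
F(t,v,\mu)=\Bigl(L_tv+\mu v,\;B_tv,\;\int_D v\,v_0\,J_{t_0}\Bigr)\in H^{k-2}(D)\times H^{k-3/2}(\partial D)\times\R .
$$
This map is $C^1$, and $F(t_0,v_0,\lambda)=(0,0,1)$. Its differential in $(v,\mu)$ at this point is
$$
(w,m)\mapsto\bigl(L_{t_0}w+\lambda w+m v_0,\;B_{t_0}w,\;\textstyle\int_D w v_0J_{t_0}\bigr).
$$

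\emph{Step 3: invertibility (the main obstacle).} Pushing forward to $\Omega_{t_0}$, the problem becomes $\Delta\tilde w+\lambda\tilde w+m u=f$ in $\Omega_{t_0}$, with $\partial_n\tilde w=g$ and $\int\tilde w u=s$. The Neumann Laplacian $\Delta+\lambda$ is Fredholm of index zero from $H^k$ to $H^{k-2}\times H^{k-3/2}$, by elliptic regularity on the $C^k$ domain. Since $\lambda$ is simple, its kernel and cokernel are both spanned by $u$; the compatibility condition is $\int f u-\int_{\partial}gu=0$.

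\emph{Injectivity.} Testing the equation against $u$ and using Green's formula gives $m=0$. Then $\tilde w$ is a multiple of $u$, and the normalisation $\int\tilde w u=0$ forces $\tilde w=0$.

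\emph{Surjectivity.} Choose $m$ so that the compatibility condition holds. Solve for $\tilde w$ with the Fredholm alternative, which gives $H^k$ regularity by elliptic estimates, and then add a multiple of $u$ to fix the normalisation.

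\emph{Step 4: conclusion.} The implicit function theorem now yields a $C^1$ map $t\mapsto(v_t,\lambda_t)$. Setting $u_t:=v_t\circ\Phi_t^{-1}$ gives a Neumann eigenfunction on $\Omega_t$ with eigenvalue $\lambda_t$. It is nonzero by the normalisation, and $\lambda_{t_0}=\lambda$.

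The delicate point is that the data $B_t$ only make sense with coefficients regular enough. If $k=2$ is borderline, I would first work in $H^1$ in weak form, replacing Steps 1–2 by the bilinear form $\int A_t\nabla v\cdot\nabla\varphi-\mu\int J_tv\varphi$, and then upgrade the regularity to $H^k$ by elliptic estimates with uniform constants.
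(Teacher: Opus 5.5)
Your proposal is correct and follows essentially the paper's route: pull the problem back to $D$, apply the implicit function theorem, and obtain invertibility of the linearisation from the Fredholm alternative together with the simplicity of $\lambda$. The differences are cosmetic. You keep the interior equation and the Neumann trace as separate components, whereas the paper packages them into a single linear form in $\mathcal{B}(D)$ via $\int_D A(t)\nabla v\cdot\nabla\varphi$. You also impose the linear normalisation $\int_D v\,v_0 J_{t_0}=1$ where the paper uses $\int_D v^2 J(t)=1$.
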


\begin{proof}
Inspired by \cite[p.203--204]{henrot-pierre}, we define the functional $\mathcal{F}:T\times H^k(D)\times\R\to \mathcal{B}(D)\times\R$ given by
$$
\mathcal{F}(t,v,\mu)=\left(\mathcal{L}_tv-\mu vJ(t); \int_Dv^2J(t)\right).
$$
Here $\mathcal{B}(D):=H^{k-2}(D)\oplus H^{k-3/2}(\partial D)$ is the set of linear forms $\phi$ for which there exists a (unique) pair $f\in H^{k-2}(D)$ and $g\in H^{k-3/2}(\partial D)$ such that
$$
\langle \phi,\varphi\rangle=\int_Df\varphi+\int_{\partial D}g\varphi,\quad\forall\varphi\in H^1(D).$$
Furthermore $J(t)$ is the Jacobian of $\Phi(t)$ and $\mathcal{L}_tv$ is the element of $\mathcal{B}(D)$ given by
$$
\langle\mathcal{L}_tv,\varphi\rangle:=\int_DA(t)\nabla v\cdot\nabla \varphi =-\int_D\dv(A(t)\nabla v)\varphi+\int_{\partial D}A(t)\nabla v\cdot n\varphi,\quad\forall\varphi\in H^1(D),
$$
where $A(t):=J(t)((D\Phi_t^{-1})^TD\Phi_t^{-1})\circ\Phi_t$ while $n$ denotes the outward normal unit vector. Note that $u$ is an $L^2$-normalised eigenfunction associated with $\lambda$ over $\Omega_t$ if and only if $\mathcal{F}(t,u\circ\Phi_t,\lambda)=(0,1)$. The functional $\mathcal{F}$ is well defined since $J(t)$ belongs to $W^{k-2,\infty}_{loc}$ and $A(t)$ to $W^{k-1,\infty}_{loc}$. Furthermore, it is $C^1$ since the dependence of $\Phi_t$ and $\Phi_t^{-1}$, together with their derivatives, is $C^1$ with respect to~$t$. The derivative at $(t_0,v_0,\mu_0)$ with respect to $(v,\mu)$ is
$$
D\mathcal{F}_{t_0,v_0,\mu_0}(v,\mu)=\left(\mathcal{L}_{t_0}v-\mu_0vJ(t_0)-\lambda  v_0J(t_0); 2\int_Dvv_0J(t_0)\right)
$$
We need to argue that when $\mu_0=\lambda$ and $v_0=u_0\circ\Phi_{t_0}$, $u_0$ being an $L^2$-normalised eigenfunction associated with $\lambda$ on $\Omega_{t_0}$, the derivative defines an isomorphism from $H^k(D)\times\R$ to $\mathcal{B}(D)\times\R$. This amounts to prove that for any $f\in H^{k-2}(D)$, $g\in H^{k-3/2}(\partial D)$, and any $c\in\R$, there exists a unique pair $\mu\in\R$ and $v\in H^{k}(D)$ such that
$$
\left\{
\begin{array}{rcll}
-\dv(A(t_0)\nabla v) & = & f+\lambda vJ(t_0)+\mu  v_0 J(t_0)& \text{in }D,\\
A(t_0)\nabla v\cdot n & = & g & \text{on }\partial D,
\end{array}
\right.
$$
together with the constraint $\int_Dvv_0J(t_0)=c$. Note that up to replacing $v$ with $V=v-G$ and $f$ with $F=f+(\lambda+\epsilon) G$, where $G\in H^k(D)$ is the $\epsilon$-harmonic extension of $g$ (namely the unique function such that $(-\dv(A(t_0)\nabla v)+\epsilon)G=0$ and $A(t_0)\nabla G\cdot n=g$) it is enough to consider the case where $g=0$. Then, since $\lambda$ is a simple eigenvalue associated with $v_0$, Fredholm's alternative (applied to the resolvent of the operator $-\dv A(t_0)+c$ with Neumann boundary condition, for some $c>0$, which is a compact operator on $L^2(D)$) indicates that there exists a solution $V\in L^2(D)$ if and only if the following orthogonality condition holds:
$$
\int_D(F+\mu v_0J(t_0))v_0=0 \quad\Leftrightarrow\quad \mu+\int_Dfv_0+\int_{\partial D} gv_0=0. 
$$
This determines $\mu$. Then the solutions are of the form $V=U+\alpha v_0$, for some uniquely determined $U\in L^2(D)$, and for any $\alpha\in\R$. But the constraint $\int_Dvv_0J(t_0)$ imposes a unique value for~$\alpha$. In turn, there exists a unique solution $v=V+G$ to the initial equation, and by elliptic regularity for the Neumann problem \cite[Theorem~15.2]{agmon-douglis-nirenberg}, this $v$ belongs to $H^k(D)$. This proves that $D\mathcal{F}_{t_0,v_0,\mu_0}$ is an isomorphism, and by the implicit function Theorem there exists a unique $C^1$ path $t\mapsto (v_t,\lambda_t)\in H^k(D)\times\R$, for $t$ close to $t_0$, such that $\mathcal{F}(t,v_t,\lambda_t)=(0,1)$ with $\lambda_{t_0}=\lambda$. By definition of $\mathcal{F}$, this establishes the result.
\end{proof}



\begin{appendix}
\section{Convergence of graph-like domains}\label{annexe:convergence}

In this appendix we recall and extend results contained in the paper of Post \cite{post}. For the record, let us also mention the monograph~\cite{post-livre}, on the same topic and by the same author. We will first define the notion, for a family of domains, of convergence towards a planar graph. The purpose of this definition is to ensure that the Neumann spectrum of the domains converge to that of the graph and that the eigenfunctions also converge to the eigenfunctions of the graph, in some sense. Since the limit of the domains is lower dimensional the correct notion of convergence of eigenfunctions is not trivial. In \cite{post} some $L^2$ convergence is proved, see Theorem~\ref{thm:convergence}. In the present paper however, the $L^2$ convergence is not enough and we needed to prove what we call a strong \emph{transversal} convergence, see Proposition~\ref{prop:convergence}. Let us first give the definition of convergence towards a graph. The definition is basically made in order that all the assumptions of \cite{post}, in particular (G1)--(G7), are fulfilled.

\begin{definition}\label{def:convergence vers graph}
Let $G=(V,E)$ be a planar connected metric graph and let $(\Theta_h)_{0<h<h_0}$ be a family of planar connected bounded open sets, for some $h_0>0$. We say that $\Theta_h$ converges to $G$ as $h\to0$ whenever for each $0<h<h_0$ there exists a decomposition
\begin{equation}\label{eq:decomposition}
\overline{\Theta}_h=\bigcup_{v\in V} \overline{U}_{v,h}\cup\bigcup_{e\in E}\overline{U}_{e,h},
\end{equation}

where $U_{v,h}$ and $U_{e,h}$ are open sets which are all mutually disjoint. Their closures are also assumed disjoint, except when $v$ is an endpoint of $e$, in which case $\partial U_{v,h}\cap\partial U_{e,h}$ is diffeomorphic to $(0,1)$. Furthermore, $U_{v,h}$ and $U_{e,h}$ are respectively the image of a fixed open set $U_v$ independent of $h$ and of $U_e:=(-l_e/2,l_e/2)\times(0,1)$ (where $l_e$ is the length of $e$) through $C^1$-differomorphisms $F_{v,h}$ and $F_{e,h}$ such that
\begin{enumerate}
\item the inequality $c_- h^2\leq (D_xF_{h,v})^TD_x F_{h,v}\leq c_+ h^2$ holds
in the sense of quadratic forms for some $c_\pm>0$ independent of $v\in V$ and $x\in U_v$.
\item the following expansion holds uniformly with respect to $x\in U_e$ and to $e\in E$:
$$
(D_xF_{h,e})^TD_x F_{h,e}\underset{h\to 0}{=}
\begin{pmatrix}
1+\so(1) & \so(h) \\
\so(h) & h^2+\so(h^2)
\end{pmatrix}.
$$
\end{enumerate}
Lastly, the convergence is said to hold in $C^{k+1}$ if $F_{e,h}$ is a $C^{k+1}$-diffeomorphism and if the previous asymptotic expansion holds in $C^k(U_e)$.
\end{definition}

As annouced, thanks to \cite{post} this definition allows to prove some key result on the convergence of Neumann eigenvalues and eigenfunctions for a family of domains converging to a planar graph.

\begin{theoreme}[\cite{post}]\label{thm:convergence}
Let $(\Theta_h)_{0<h<h_0}$ be a family of planar domains converging to a planar graph $G$. Then $\mu_n(\Theta_h)$ converges to the $n$-th eigenvalue of\/ $G$ as $h$ goes to zero, and if that eigenvalue is simple there exists a family of $L^2(\Theta_h)$-normalised eigenfunctions $u_h$ associated with $\mu_n(\Theta_h)$ such that
$$
\|u_h-\overline{u}^h/\sqrt{h}\|_{L^2(\Theta_h)}\to0,
$$
where $\overline{u}^h$ is the constant transversal extension of the $L^2(G)$-normalised $k$-th eigenfunction $u$ on $G$, defined for $x\in\Theta_h$ by
$$
\overline{u}^h(x)=
\begin{cases}
u|_e(y) & \text{if } x=F_{h,e}(y,z)\in U_{e,h}\text{ for some }e\in E, \\
0 & \text{otherwise}.
\end{cases}
$$
\end{theoreme}

\begin{remarque}\label{remarque:convergence vers graphe}
\begin{enumerate}
\item Note that the function $u|_e$ is seen as a function defined over $(-l_e/2,l_e/2)$, thanks to the identification $e\cong[-l_e/2,l_e/2]$.
\item Observe that the functions $u_h$ and $\overline{u}^h/\sqrt{h}$ pointwise diverge and yet remain $L^2$-normalised as $h\to0$. If instead they remained bounded, the convergence would hold trivially since the measure of $\Theta_h$ goes to zero.
\end{enumerate}
\end{remarque}

\begin{proof}
The result follows from \cite[Theorem~2.13]{post}, as long as assumptions (G1)--(G7) of \cite{post} are satisfied. Note that the manifold $F$ of \cite[section~2.2]{post}, of total measure one, corresponds here to $(0,1)$ and hence that its dimension is $m=1$. Assumptions (G1)--(G3) are a consequence of the compactness of the metric graph $G$. Assumption (G4) comes from the asymptotic expansion of $(D_xF_{h,e})^TD_x F_{h,e}$, which is the matrix associated with the metric on $U_e$ (note that in our setting $r_e\equiv1$). Assumptions (G5) and (G6) follow from the inequality $c_-h^2\leq (D_xF_{h,v})^TD_x F_{h,v}\leq c_+h^2$ and from the fact that $(D_xF_{h,v})^TD_x F_{h,v}$ is the matrix of the metric on $U_v$. Assumption (G7) comes, once again, from the compactness of the graph. Therefore we conclude that the $n$-th eigenvalue on $\Theta_h$ converge to the $n$-th eigenvalue on $G$ and that by \cite[Theorem~A.12]{post} and \cite[equation~(2.7)]{post} there exists for each $h$ an $n$-th eigenfunction $v_h$ associated with $\mu_n(\Theta_h)$ such that
$$
\|v_h-\overline{u}^h/\sqrt{h}\|_{L^2(\Theta_h)}\to 0.
$$
From this convergence and from the asymptotic expansion of Definition~\ref{def:convergence vers graph}, a straightforward computation shows that
$$
\|v_h\|^2_{L^2(\Theta_h)}\sim\|\overline{u}^h/\sqrt{h}\|^2_{L^2(\Theta_h)}\sim h^{-1}\sum_{e\in E}\int_{-l_e/2}^{l_e/2}\int_{0}^1 u(y)^2\sqrt{h^2+\so(h^2)}\textup{d}y\textup{d}z\sim 1,
$$
where we used that $u$ is normalised in $L^2(G)$. Taking $u_h:=v_h/\|v_h\|_{L^2(\Theta_h)}=v_h+\so(1)$ as the $L^2(\Theta_h)$-normalised eigenfunction we thus obtain the desired result.
\end{proof}

The purpose of the next proposition is to improve the $L^2$ type convergence of Theorem~\ref{thm:convergence} and to prove what is called a strong transversal convergence.

\begin{proposition}\label{prop:convergence}
Let $(\Theta_h)_{0<h<h_0}$ be a family of domains converging in $C^{k+2}$ to a planar graph $G$, for some $k\in\N$. Then, with the assumptions and notations of Theorem~\ref{thm:convergence}, $\sqrt{h}u_h$ converges transversally almost everywhere to $u$ in $C^{k+1,\alpha}$. This means that for any edge $e$ of\/ $G$, if $v_h=u_h\circ F_{e,h}$ denotes the pull-back of the eigenfunction $u_h$ in $U_e$, then for almost every $x_0\in(-l_e/2,l_e/2)$ we have that $\sqrt{h}v_h(x_0,\cdot)\to u(x_0)$ in $C^{k+1,\alpha}([0,1])$.
\end{proposition}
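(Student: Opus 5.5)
We work on a fixed edge $e$ and set $w_h:=\sqrt{h}\,v_h=\sqrt{h}\,u_h\circ F_{e,h}$ on $U_e=(-l_e/2,l_e/2)\times(0,1)$. Pulling back the Neumann problem on $U_{e,h}$ by $F_{e,h}$ gives a divergence-form equation
\[
-\dv\bigl(A_h\nabla w_h\bigr)=\mu_h J_h w_h\quad\text{in }U_e,\qquad A_h\nabla w_h\cdot n=0\ \text{on }(-l_e/2,l_e/2)\times\{0,1\}.
\]
Here $J_h=\sqrt{\det (DF_{e,h})^TDF_{e,h}}=h(1+\so(1))$ and $A_h=J_h\bigl((DF_{e,h})^TDF_{e,h}\bigr)^{-1}$. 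By Definition~\ref{def:convergence vers graph}, $A_h=\mathrm{diag}(h,h^{-1})$ up to relative errors $\so(1)$ in $C^{k+1}$. We therefore divide by $h$, so the operator becomes $-\partial_x^2-h^{-2}\partial_y^2+{}$ (small perturbations). Theorem~\ref{thm:convergence} gives $\|w_h-u\|_{L^2(U_e)}\to0$, and we already know $\mu_h\to\nu^2$.

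\textbf{Step 1} is an energy bound. From the Rayleigh quotient, $\int_{\Theta_h}|\nabla u_h|^2=\mu_h$ is bounded, which in pulled-back form reads
\[
\int_{U_e}\bigl(|\partial_x w_h|^2+h^{-2}|\partial_y w_h|^2\bigr)\le C .
\]
Hence $\partial_y w_h=O(h)$ in $L^2$, and $w_h$ is bounded in $H^1(U_e)$. It follows that $w_h\to u$ weakly in $H^1$ and strongly in $L^2$, and that $\partial_y w_h\to0$.

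\textbf{Step 2} upgrades this to higher derivatives, which is the core of the proof. Differentiating in $x$ is harmless, since the Neumann condition lies on horizontal sides. We use difference quotients in $x$ on a slightly smaller interval: multiplying by a cutoff $\chi(x)$ and iterating the energy estimate should give
\[
\|\partial_x^j w_h\|_{L^2}+h^{-1}\|\partial_x^j\partial_y w_h\|_{L^2}\le C_j
\]
locally in $x$, for $j\le k+2$. The lower-order terms produced by the $\so(1)$ perturbations are handled with their $C^{k+1}$ control. To gain transversal regularity we do not use isotropic elliptic theory, whose constants blow up. Instead we solve the equation for $\partial_y^2 w_h$:
\[
h^{-2}\partial_y^2 w_h=-\partial_x^2 w_h-\mu_h w_h+(\text{small mixed terms}).
\]
This gives $\|\partial_y^2 w_h\|_{L^2}=O(h^2)$, and by bootstrapping, $\partial_y^m\partial_x^j w_h=O(h^2)$ in $L^2$ for $m\ge 2$ and all relevant $j$. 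We expect this to be the \textbf{main obstacle}: one must organise the anisotropic bootstrap so that the off-diagonal $\so(h)$ entries of the metric (scaled, $\so(1)$ cross terms $\partial_x\partial_y$) and the $x$-dependence of the coefficients never cost a negative power of $h$. We would first try the rescaled variables $(x,y)$, treating $h^{-1}\partial_y$ as the natural transversal derivative, and absorb the cross terms by the smallness $\so(1)$.

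\textbf{Step 3} passes to slices. Since $w_h$ is bounded in $L^2_x\bigl(H^{k+3}_y\bigr)$, it is bounded in $L^2_x\bigl(C^{k+1,\alpha}_y\bigr)$ and $\partial_y w_h\to0$ in $L^2_x\bigl(H^{k+2}_y\bigr)$. Also, $\int_0^1 w_h(x,y)\,dy\to u(x)$ in $L^2_x$, by the strong $L^2$ convergence. Hence $\|w_h(x,\cdot)-u(x)\|_{H^{k+2}(0,1)}\to0$ in $L^2_x$, so a subsequence converges for a.e. $x_0$. By Sobolev embedding $H^{k+2}(0,1)\subset C^{k+1,\alpha}$ this gives slice convergence in $C^{k+1,\alpha}([0,1])$. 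To get the full family rather than a subsequence, we either use a quantitative rate or note that the proof applies along any sequence $h_n\to0$, which suffices for the applications in Section~\ref{sec:graph-like domains}.
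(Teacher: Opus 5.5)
Your argument is correct in substance, but it takes a genuinely different route from the paper.

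\textbf{The paper's route.} The paper never proves $h$-uniform estimates on $U_e$. It blows up horizontally by a factor $1/h$ around $x_0$. On the unit window $\omega=(-1,1)\times(0,1)$ the pulled-back operator is then an $o(1)$-perturbation of the flat Laplacian, with Neumann conditions on $y=0,1$. It applies off-the-shelf local estimates (Agmon--Douglis--Nirenberg) to the blown-up error $\tilde w_h$. The required smallness comes from $\|\tilde w_h\|_{L^1(\omega)}^2\le 8\,Mf_h(x_0)$, where $f_h(x)=\int_0^1|\sqrt h\,v_h-u|^2\,dy$ tends to $0$ in $L^1$. The Hardy--Littlewood maximal function $Mf_h$ therefore tends to $0$ in measure, hence a.e.\ along a subsequence.

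\textbf{Your route.} You keep the thin scaling and work with the gradient $(\partial_x,h^{-1}\partial_y)$, in which the coefficient matrix is $I+o(1)$ with $x$-derivatives of order $o(1)$. You get tangential bounds by difference quotients, recover $\partial_y$-derivatives from the equation, and finish with Poincar\'e--Wirtinger in $y$ and Fubini. No maximal function is needed.

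\textbf{What each buys.} The paper's route gives standard $L^p$ theory for all $p$, hence $C^{k+1,\alpha}$ for every $\alpha<1$. Your $L^2$-based bound goes through $H^{k+2}(0,1)\hookrightarrow C^{k+1,1/2}$, so it only gives $\alpha\le 1/2$.

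Two harmless inaccuracies:
\begin{itemize}
\item Under $C^{k+2}$ convergence the coefficients are only $C^{k+1}$. Your $L^2_x(H^{k+3}_y)$ bound and the range $j\le k+2$ therefore overshoot by one derivative, but you only use the level that is actually available.
\item In general $\partial_y^2 w_h=o(h)$ rather than $O(h^2)$, because $\partial_y$ of the metric remainders is only $o(1)$ in the unscaled variable.
\end{itemize}

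Conversely, your route gives something the paper's does not, because you control $\partial_x$ of the slices. Set $E_h(x)=\|w_h(x,\cdot)-u(x)\|_{H^{k+1}(0,1)}^2$. It tends to $0$ in $W^{1,1}$ of each compact subinterval, since $\int|E_h'|\le 2\|\partial_x(w_h-u)\|_{L^2_xH^{k+1}_y}\|w_h-u\|_{L^2_xH^{k+1}_y}$. Hence $E_h\to0$ uniformly. This gives convergence at every interior $x_0$ and for the whole family $h\to0$. The cost is one transversal derivative, and there is no cost in the $C^\infty$ setting where the proposition is applied.

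This settles the subsequence issue you flagged. The paper's closing ``uniqueness of the limit'' step does not: the exceptional null set depends on the subsequence, and a.e.\ convergence is not topological.
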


The proof follows from elliptic regularity results applied to the slab obtained by stretching horizontally the rectangle $U_e=(-l_e/2,l_e/2)\times(0,1)$. The end of the proof is based on results applied to the maximal function of Hardy and Littlewood. We recall that for a function $f:I\to\R$ defined on an open interval $I$, the maximal function is the function $Mf:I\to\R$ given by
\begin{equation}\label{eq:maximale}
Mf(x)=\sup_{\gamma>0}\frac{1}{2\gamma}\int_{x-\gamma}^{x+\gamma}|f(t)|\mathrm{d}t
\end{equation}
An important result with respect to the maximal function is that for any $f\in L^1(I)$ and any $\epsilon>0$, $|\{Mf>\epsilon\}|\leq \frac{C}{\epsilon}\|f\|_{L^1(I)}$, see e.g. \cite{melas2003}. In particular, if $f_n\to 0$ in $L^1(I)$ then $Mf_n\to 0$ in measure, namely for all $\epsilon>0$, $|\{|Mf_n|\geq \epsilon\}|\to0$. Furthermore, we recall that convergence in measure implies a.e. convergence, up to a subsequence \cite[Theorem~2.2.5]{bogachev}.

\begin{proof}[Proof of Proposition~\ref{prop:convergence}]
By Theorem~\ref{thm:convergence} we already know that $\sqrt{h}v_h$ converges to $\overline{v}^h$ in $L^2(U_e)$, where $\overline{v}^h=\overline{u}^h\circ F_{e,h}$. To improve this convergence we aim at applying elliptic estimates. Indeed, the function $u_h$ satisfies the eigenvalue equation for the Laplacian over $U_{e,h}$ with eigenvalue $\mu_h$. When pulled back on $U_e$, the equation is transformed into
$$
-\Delta_h v_h=\mu_h v_h,
$$
where $\Delta_h$ is the Laplace-Beltrami operator associated with the metric induced by $F_{e,h}$, whose matrix is of the following form by virtue of the convergence of $\Theta_h$ to $G$:
$$
\begin{pmatrix}
1+\so(1) & \so(h)\\ \so(h) & h^2+\so(h^2)
\end{pmatrix}
$$
The issue is that this degenerates as $h$ goes to zero, hence elliptic estimates cannot be applied directly. To resolve this problem we stretch the plane horizontally around a given point $x_0\in (-l_e/2,l_e/2)$. More precisely, for a function $v$ defined on $U_e$ we introduce $\tilde{v}(x,y)=v(hx+x_0,y)$, defined on the horizontally blown up rectangle $\tilde{U}_{h,e}:=(-l_e/2-x_0,l_e/2-x_0)/h\times(-1,1)$. With this transformation $\tilde{v}_h$ satisfies
$$
-\tilde{\Delta}_h\tilde{v}_h=h^2\mu_h\tilde{v}_h,
$$
where $\tilde{\Delta}_h$ is now the Laplace-Beltrami operator associated with the metric
$$
\begin{pmatrix}
1+\so(1) & \so(1)\\ \so(1) & 1+\so(1)
\end{pmatrix}.
$$
Note that the above $\so(1)$ are understood in the sense of $C^{k+1}$, which means that there $C^{k+1}(\tilde{U}_{h,e})$ norm converges to zero. On the other hand, recall that $\overline{v}^h(x,y)=u(x)$ for all $(x,y)\in U_e$, and that $u$ satisfies the equation $-u''=\mu u$ where $\mu$ is the eigenvalue of the graph. This shows that the horizontal blow-up $\tilde{\overline{v}}^h$ of $\overline{v}^h$ satisfies over $\tilde{U}_{h,e}$ the equation
$$
-\Delta\tilde{\overline{v}}^h=h^2\mu\tilde{\overline{v}}^h,
$$
We now set $w_h=\sqrt{h}v_h-\overline{v}^h$ and observe that we have
$$
-\tilde{\Delta}_h\tilde{w}_h=h^2\mu_h\tilde{w}_h+h^2(\mu_h-\mu)\tilde{\overline{v}}^h+(\Delta-\tilde{\Delta}_h)\tilde{\overline{v}}^h.
$$
Recall that this elliptic equation is defined over the rectangle $\tilde{U}_{h,e}$ which is increasing as $h$ goes to zero and converges to the horizontal slab $S=\R\times(0,1)$. Thus any bounded open subset $\omega$ of the slab is eventually contained in  $\tilde{U}_{e,h}$. Furthermore the term $h^2(\mu_h-\mu)\tilde{\overline{v}}^h$ converges to zero in $C^\infty(\omega)$ since $\tilde{\overline{v}}^h$ and its derivatives are bounded by that of $u$. Similarly the term $(\Delta-\tilde{\Delta}_h)\tilde{\overline{v}}^h$ converges to zero in $C^{k}(\omega)$ since the coefficients of $\Delta$ and $\tilde{\Delta}_h$ differ only by $\so(1)$, in view of the structure of the metric. Note that we used the fact that the convergence of the domains to the graph holds in $C^{k+2}$, since the coefficients in $\tilde{\Delta}_h$ involve derivatives of the entries of the matrix. Lastly, $w_h$ goes to zero in $L^2(U_e)$ and hence $\sqrt{h}\tilde{w}_h$ converges to zero in $L^2(\omega)$.

Through local elliptic estimates for the Neumann problem, one can show that $\sqrt{h}\tilde{w}_h\to0$ in $C^{k+1,\alpha}(\omega)$. But that is not enough for our purpose since it only gives the useless result $hv_h(x_0,\cdot)\sim\sqrt{h}u(x_0)\to0$. Instead, we aim at proving that $\tilde{w}_h\to0$ in $C^{k+1,\alpha}$. To that end, we consider as the window $\omega$ the rectangle $\omega=(-1,1)\times(0,1)$ and we observe that by the Cauchy-Schwarz inequality,
$$
\(\int_\omega|\tilde{w}_h|\right)^2\leq 4\int_\omega\tilde{w}_h^2=4\int_{x_0-h}^{x_0+h}\int_{0}^1\frac{w_h^2}{h}\leq 4\sup_{\gamma>0}\int_{x_0-\gamma}^{x_0+\gamma}\int_{0}^1\frac{w_h^2}{\gamma}= 8Mf_h(x_0),
$$
where $Mf_h$ is the maximal function~\eqref{eq:maximale} associated to $f_h:x\mapsto\int_{0}^1w_h^2(x,y)\mathrm{d}y$. Recall that since $w_h\to0$ in $L^2(U_e)$ we have $f_h\to0$ in $L^1((-l_e/2,l_e/2))$ and consequently $Mf_h\to0$ in measure. Then, up to extracting a subsequence, $Mf_h$ converges a.e. to zero. This shows that as long as $x_0\in(-l_e/2,l_e/2)$ is chosen outside some set of measure zero, the function $\tilde{w}_h$ converges to zero in $L^1(\omega)$, up to a subsequence.

By a local elliptic estimate in $\omega$ (see Theorem~15.2 and the italicised remark p.~706 in~\cite{agmon-douglis-nirenberg}) we now conclude that $\tilde{w}_h$ converges to zero in $H^2(\omega)$, up to a subsequence. By Sobolev embeddings, the convergence then holds uniformly in $\omega$, and hence in $L^p(\omega)$ for any $p>2$. Iterating elliptic regularity arguments, the convergence holds in $W^{l,p}(\omega)$ for $l=0,...,k+2$. Note that the iteration is limited by the term $(\Delta-\tilde{\Delta}_h)\tilde{\overline{v}}^h$ in the second member, which converges only in $C^k(\omega)$. This shows that $\tilde{w}_h$ goes to zero in $C^{k+1,\alpha}(\overline{\omega})$ for all $0<\alpha<1$, by Sobolev embeddings. We thus conclude that for a.e. $x_0\in(-l_e/2,l_e/2)$ the convergence of $\sqrt{h}v_h(x_0,\cdot)$ to $u(x_0)$ holds in $C^{k+1,\alpha}([0,1])$, up to a subsequence.

More precisely, we showed that for each sequence $h_n\to0$ there exists a subsequence, still denoted $h_n$, and a negligible set of $(-l_e/2,l_e/2)$ such that for all $x_0$ outside this set $\sqrt{h_n}v_{h_n}(x_0,\cdot)\to u(x_0)$. By uniqueness of the limit, we actually conclude that the whole sequence $\sqrt{h}v_{h}(x_0,\cdot)$ must converge to $u(x_0)$ as $h$ goes to zero, for $x_0$ outside some negligible set.

\end{proof}

\begin{remarque}
We would like to point out that the procedure followed in the above proof, namely pulling back the problem in the fixed rectangle $U_e$ and then in the elongating rectangle $\tilde{U}_{e,h}$, can be understood in a simplified way, in the case where the edge $e$ is already a straight segment and where the map $F_{e,h}$ corresponds to a vertical stretching, of factor $1/h$. Indeed, in this case the rectangle $\tilde{U}_{e,h}$ is nothing else but a dilation of factor $1/h$ of the initial edge block $U_{e,h}$, and working inside the window $\omega$ amounts to zooming around a particular point of the edge, at scale $h$. Of course it is not possible to apply the elliptic estimates in the shrinking window, and that is why we needed to rescale it. We also recall that it was very important in the proof that not only $\sqrt{h}\tilde{w}_h$ converges to zero, but that $\tilde{w}_h$ converges to zero in $L^1(\omega)$. In this respect, the choice of a window $\omega$ that is bounded not only in the $y$, but also in the $x$ direction (which corresponds to zooming at the same scale in both directions) was a key feature of the proof. We are not able to give an intuitive justification of this last technical trick.
\end{remarque}
\end{appendix}

\section*{Acknowledgements}
This work was partially supported by the Funda\c{c}\~ao para a Ci\^encia e a Tecnologia, Portugal, via the research centre
GFM, reference UID/00208/2025.

\printbibliography



\end{document}